\documentclass[pdflatex,sn-mathphys-num]{sn-jnl}% Math and Physical Sciences Numbered Reference Style
\usepackage{graphicx}%
\usepackage{multirow}%
\usepackage{amsmath,amssymb,amsfonts}%
\usepackage{amsthm}%
\usepackage{mathrsfs}%
\usepackage{xcolor}%
\usepackage{textcomp}%
\usepackage{manyfoot}%
\usepackage{booktabs}%
\usepackage{algorithm}%
\usepackage{algorithmicx}%
\usepackage{algpseudocode}%
\usepackage{listings}%
\usepackage[title]{appendix}%
\theoremstyle{thmstyleone}%
\newtheorem{corollary}{Corollary}
\newtheorem{theorem}{Theorem}%  meant for continuous numbers
\newtheorem{proposition}[theorem]{Proposition}% 
\newtheorem{remark}{Remark}%
\newtheorem{lemma}{Lemma}

\theoremstyle{thmstylethree}%

\begin{document}

\title[Admissible Discrete Linear Propagators]{Admissible Discrete Linear Propagators for High-Order Time Splittings of Rotational Nonlinear Schr\"odinger Equations with Arbitrary Three-Dimensional Rotation}

%%=============================================================%%
%% GivenName	-> \fnm{Joergen W.}
%% Particle	-> \spfx{van der} -> surname prefix
%% FamilyName	-> \sur{Ploeg}
%% Suffix	-> \sfx{IV}
%% \author*[1,2]{\fnm{Joergen W.} \spfx{van der} \sur{Ploeg} 
%%  \sfx{IV}}\email{iauthor@gmail.com}
%%=============================================================%%

\author*[1]{\fnm{Tianqi} \sur{Zhang}}\email{tianqiz@zufe.edu.cn}

\author[2]{\fnm{Fei} \sur{Xue}}\email{fxue@clemson.edu}
\equalcont{These authors contributed equally to this work.}

\affil*[1]{\orgdiv{School of Data Sciences}, \orgname{Zhejiang University of Finance and Economics}, \orgaddress{\street{18 Xueyuan Street}, \city{Hangzhou}, \state{Zhejiang}, \postcode{310018}, \country{China}}}

\affil[2]{\orgdiv{School of Mathematical and Statistical Sciences}, \orgname{Clemson University}, \orgaddress{\street{220 Parkway Dr.}, \city{Clemson}, \postcode{29634}, \state{SC}, \country{United States}}}

%\affil[3]{\orgdiv{Department}, \orgname{Organization}, \orgaddress{\street{Street}, \city{City}, \postcode{610101}, \state{State}, \country{Country}}}

\abstract{
We study robust high-order time splittings for nonlinear Schr\"odinger equations whose linear part is defined by the Laplacian and an arbitrary three-dimensional rotation operator. After Fourier pseudospectral discretization, a continuous exact factorization of the linear flow need not yield a method self-adjoint fixed-grid propagator. For the original stage-wise explicit exact integrator, we identify a quadratic even term in the local logarithm and show that its visibility is state-dependent, so the observed temporal order of accuracy can depend on the initial data. We then formulate fixed-grid admissibility for discrete linear propagators and construct two admissible propagators for arbitrary three-dimensional rotation: a symmetrized explicit exact integrator and a palindromic generalized shear propagator. Both are unitary, first-order consistent, method self-adjoint, and have odd local logarithms. Numerical experiments verify the predicted defect mechanism and demonstrate recovery of the designed second-, fourth-, and sixth-order behavior with the admissible propagators.
}

\keywords{nonlinear Schr\"odinger equations; Bose--Einstein condensates; high-order time-splitting methods; arbitrary-angle rotation; self-adjoint discrete propagators}

%%\pacs[JEL Classification]{D8, H51}
%%\pacs[MSC Classification]{35A01, 65L10, 65L12, 65L20, 65L70}

\pacs[MSC Classification]{65M70, 65P10, 81Q05}

\maketitle

\section{Introduction}
\label{sec:intro}

Time-splitting spectral methods are a widely used tool for time-dependent nonlinear Schr\"odinger equations (NLS), including Gross--Pitaevskii equations (GPEs) arising in Bose--Einstein condensation \cite{bao2003,BaoJaksch2003,BaoCai2013}. For rotating and dipolar Bose--Einstein condensates, such methods typically combine Fourier pseudospectral discretization of dispersive linear terms with exact pointwise phase updates for trapping, local nonlinear, and nonlocal dipolar interactions \cite{BaoWang2006,bao2006rotating,bao2010dipolar,goral2000dipolar,lahaye2009dipolar}. High temporal order is usually obtained by composing a symmetric second-order splitting block, following Strang splitting, symmetric composition, and geometric integration theory \cite{Strang1968,ForestRuth1990,Suzuki1990,Yoshida1990,HairerLubichWanner2006,mclachlan2002splitting,blanes2008splitting,BlanesCasas2016}. Related high-order splitting and exponential-integrator analyses for Schr\"odinger equations include \cite{Lubich2008,Faou2012,blanes2010schrodinger,thalhammer2008high,BesseDujardinLacroixViolet2017}.

This paper identifies and resolves a fully discrete structural obstruction to high-order time splitting for semi-discrete rotational NLS whose linear part is defined by the Laplacian and an arbitrary three-dimensional angular momentum operator,
\[
  {\mathcal L}_\Omega=\textstyle-\frac12\Delta-\Omega\cdot \mathbf{L},\qquad \Omega=(\Omega_x,\Omega_y,\Omega_z)^T,
\]
where \(\mathbf{L}=(L_x,L_y,L_z)^T\), \(L_x=-i(y\partial_z-z\partial_y)\), \(L_y=-i(z\partial_x-x\partial_z)\), and \(L_z=-i(x\partial_y-y\partial_x)\). The remaining terms are assumed to generate an exactly integrable real phase flow, as in rotating GPEs with contact, Hartree, or dipolar interactions. Since this nonlinear phase flow freezes the density during the substep, it is method self-adjoint. Thus, on a fixed Fourier pseudospectral grid, the realizability of high-order symmetric compositions is governed by the discrete propagator used for the semi-discrete linear subproblem \(i\partial_t\psi={\mathcal L}_\Omega\psi\).

In some cases, the desired linear-propagator structure is automatic or transparent. For instance, without rotation, the flow \(\psi(\mathbf{x},t+\tau)=\exp(i\tau\Delta/2)\psi(\mathbf{x},t)\) is diagonal in Fourier space and forms a unitary reversible group on the fixed grid. For constant-coefficient matrix-valued couplings, as in spinor or spin--orbit-coupled condensate models, Fourier discretization gives independent finite-dimensional Hermitian matrix exponentials on each mode, again yielding an exact unitary reversible group \cite{BaoCai2018Spinor}. For rotating condensates, structure-preserving approaches include geometry-adapted spectral discretizations, rotating Lagrangian coordinates, and Fourier-splitting or Magnus-type treatments of rotating quadratic Hamiltonians \cite{BaoLiShen2009,BaoMarahrensTangZhang2013,Bader2018}. For axis-aligned rotation, shear-based mappings such as RSDA provide efficient FFT-based realizations \cite{BernierCasasCrouseilles2020,LiuYuanZhang2025,LiuXieYuanZhangZhao2026RSDA4}; although such shear propagators need not be exact exponentials of the finite-dimensional Laplace--rotation matrix, a palindromic organization makes them method self-adjoint as one-step maps.

For arbitrary three-dimensional rotation, the situation is more delicate. The practical issue is that a formally high-order splitting formula can behave like a low-order method if the discrete linear subflow lacks the time-reversibility assumed by the composition theory. Specifically, explicit exact integrator (EEI) factorizations for quadratic differential flows were developed in the exact-splitting framework \cite{bernier2021exact,bernier2021kinetic}, and their specialization to high-order compact splittings for arbitrary-angle rotational dipolar BECs \cite{LiuZhang2025}. At the continuous level these factorizations are exact and stagewise unitary. However, continuous exactness and stagewise unitarity do not imply that the Fourier pseudospectral realization on a fixed grid is method self-adjoint. In particular, the discrete EEI map \(E_h(\tau)\) need not satisfy \(E_h(-\tau)E_h(\tau)=I\), nor equal the exact exponential of the fixed-grid generator. This distinction is invisible if one checks only mass conservation or unitarity at a fixed time step. Using the EEI coefficients of \cite{LiuZhang2025}, we show that the stagewise fixed-grid realization may lack the method self-adjointness required by high-order symmetric composition.

The key issue is a finite-dimensional commutator mismatch. Continuous factorization formulas rely on exact algebraic identities among differential, shear, and phase operators. After fixed-grid representation by projection or collocation/interpolation, intermediate continuous stages may leave the resolved Fourier space, and the resulting stage matrices need not satisfy the same cancellation identities. For the discrete EEI map, this loss of cancellation can appear as an even local-logarithmic term,
\[
  \log E_h(\tau)=\tau A_h+\tau^2D_{2,h}+O(\tau^3),\qquad D_{2,h}\ne0,
\]
which induces second-order reversibility and near-group defects. Once inherited by a Strang block, this quadratic even defect cannot be canceled by real high-order symmetric compositions, since any real consistent composition retains the positive coefficient of the form \(\sum_j a_j^2\). Thus a formally high-order method may lose its design order when the defect is visible on the propagated states, even though the same defect can be masked for benign localized states, such as commonly used Gaussian-type data.

We first make this mechanism precise at the matrix level. Starting from the EEI stage formula, we identify unresolved Fourier tails generated by intermediate stages and relate them to a quadratic local-logarithmic defect of the discrete EEI map. We then construct two admissible discrete linear propagators in this setting: a symmetrized EEI propagator and a palindromic generalized shear-based (GSH) propagator. These propagators are not required to be exact exponentials of the discrete Laplace--rotation matrix; nevertheless, they are unitary, first-order consistent, method self-adjoint, and have odd local logarithms, thereby providing the parity structure needed by high-order symmetric composition. Adjoint and symmetric-conjugate constructions for unitary splitting methods are also related to \cite{bernier2023symmetric}.

The contributions of this study are as follows. First, we derive a matrix-level defect representation for the original discrete EEI formula, including the associated reversibility and near-group defects, and show why the inherited even term cannot be removed by real high-order symmetric compositions. Second, we construct discrete symmetrized EEI and palindromic GSH propagators and prove their admissibility on fixed Fourier grids with arbitrary three-dimensional rotation. Third, we provide mechanism-driven numerical experiments, including unresolved-tail diagnostics, reversibility and group defects, state-dependent visibility under grid refinement, admissible-propagator parity tests, and three-dimensional rotational dipolar GPE dynamics. The self-adjointness and symmetric-composition principles are standard in geometric integration \cite{HairerLubichWanner2006,mclachlan2002splitting,BlanesCasas2016}; the new point is that they are applied at the level of the fixed-grid rotational linear propagator, where the original EEI realization fails to provide the parity structure required by high-order composition.

All theoretical statements in this paper are fixed-grid, finite-dimensional structural statements by design. The issue addressed here is the temporal-order realizability of splitting schemes after a Fourier grid has been fixed, for the semi-discrete system and one-step maps actually advanced in implementation. This is the natural setting for deciding whether a symmetric high-order composition can realize its design temporal order. For smooth solutions, the spectral spatial approximation error is a separate issue governed by standard Fourier approximation theory; it is not the mechanism responsible for the parity defect analyzed here. Indeed, as shown below, for structure-sensitive states the fixed-grid defect can remain algebraically small or even \(O(1)\) relative to a spectrally small spatial floor, so it is not removed merely by refining the grid. Accordingly, we do not pursue a uniform-in-\(h\) PDE-level convergence theorem.

The remainder of the paper is organized as follows. In Section~\ref{sec:prelim}, we introduce the NLS and Fourier pseudospectral setting, the method-adjoint and local-logarithm preliminaries. In Section~\ref{sec:eei_defect}, we analyze the original discrete EEI linear propagator, including unresolved Fourier tails, state-dependent visibility, and the consequences of an even local-logarithmic term for high-order composition. In Section~\ref{sec:admissible_propagators}, the discrete symmetrized EEI and palindromic GSH propagators are proposed and analyzed as admissible linear propagators for high-order composition. Section~\ref{sec:numerics} presents numerical experiments, and Section~\ref{sec:conclusion} concludes the paper.

\section{Problem background and preliminaries}
\label{sec:prelim}

\subsection{Rotational NLS and the Fourier spectral setting}
\label{subsec:rot_nls_setting}

Let us consider rotational nonlinear Schr\"odinger equations on a three-dimensional rectangular box \({\mathcal D}\), discretized by a periodic Fourier pseudospectral method,
\[
  i\partial_t\psi=\left({\mathcal L}_\Omega+W[|\psi|^2]({\bf x})\right)\psi ,
  \qquad
  {\mathcal L}_\Omega=\textstyle-\frac12\Delta-\Omega\cdot \mathbf{L},\quad
  \Omega=(\Omega_x,\Omega_y,\Omega_z)^T\in{\mathbb R}^3 .
\]
Here \(\mathbf{L}=(L_x,L_y,L_z)^T\) is the angular momentum operator,
\[
  L_x=-i(y\partial_z-z\partial_y),\quad
  L_y=-i(z\partial_x-x\partial_z),\quad
  L_z=-i(x\partial_y-y\partial_x).
\]
The real phase potential \(W[|\psi|^2]\) may include an external potential, a local density nonlinearity, and a nonlocal Hartree term. For the rotational dipolar GPE,
\[
  W[|\psi|^2]=V({\bf x})+\beta|\psi|^2+\lambda(U_{\rm dip}*|\psi|^2),
\]
with the convolution evaluated by Fourier-based or related fast algorithms in practical implementations
\cite{bao2010dipolar,jiang2014nufft,greengard2018anisotropic,LiuZhangFSA2026}. The structural analysis below uses only that the nonlinear subproblem is an exactly integrable real phase flow.

Throughout Sections~\ref{sec:prelim}--\ref{sec:admissible_propagators}, the spatial grid is fixed. Let \(X_h\) be the finite-dimensional Fourier pseudospectral space, \({\mathcal K}_h\) its resolved multi-index set, and \(P_h\) the Fourier projection onto \(X_h\). We write \(N_h\asymp h^{-1}\) for the one-dimensional resolution parameter used in cutoff estimates, and use the discrete inner product
\[
  \langle u,v\rangle_h = \textstyle h_xh_yh_z\sum_{\bf j}u_{\bf j}\overline{v_{\bf j}},
  \qquad
  \|u\|_h^2=\langle u,u\rangle_h .
\]
The fixed-grid semidiscrete system is
\begin{equation}\label{eq:semidissys}
     \frac{d\psi}{dt}=A_h\psi+B_h(\psi),
  \qquad
  A_h=-i{\mathcal L}_{\Omega,h},\qquad
  B_h(\psi)=-iW_h[|\psi|^2]\psi ,
\end{equation}
where \(A_h\) is skew-Hermitian. Thus all local expansions and operator estimates in Sections~\ref{sec:prelim}--\ref{sec:admissible_propagators} are finite-dimensional statements with a fixed mesh size $h$.

The nonlinear subflow is denoted by \({\mathcal N}_h(\tau)\). In the class considered here,
\[
  {\mathcal N}_h(\tau)\psi
  =\exp\!\left(-i\tau W_h[|\psi|^2]\right)\psi ,
\]
where \(W_h[|\psi|^2]\) is real on the grid. Since this multiplier has pointwise modulus one, the nodal density is preserved and the same phase potential is used by the reverse step.

\begin{lemma}[Reversibility of density-dependent nonlinear phase flows]
\label{lem:nonlinear_phase_reversible}
Let \(W_h:{\mathbb R}_+^M\to{\mathbb R}^M\) be a real-valued grid functional depending only on \(|\psi|^2\), and define
\[
  {\mathcal N}_h(\tau)\psi
  =\exp\!\left(-i\tau\,\operatorname{diag}(W_h[|\psi|^2])\right)\psi .
\]
Then \({\mathcal N}_h(\tau)\) preserves the nodal density and satisfies
\[
  {\mathcal N}_h(-\tau){\mathcal N}_h(\tau)\psi=\psi,\qquad \psi\in X_h ,
\]
for all real \(\tau\). In the terminology below, \({\mathcal N}_h\) is method self-adjoint.
\end{lemma}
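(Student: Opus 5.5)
The plan is a direct componentwise computation, using only that the multiplier is a diagonal matrix of unimodular entries whose phase depends on the density alone.

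\textbf{Density preservation.} Fix \(\psi\in X_h\) and set \(\rho=|\psi|^2\in{\mathbb R}_+^M\), taken componentwise on the grid, and \(w=W_h[\rho]\in{\mathbb R}^M\). Then \({\mathcal N}_h(\tau)\psi\) has nodal values \(e^{-i\tau w_{\bf j}}\psi_{\bf j}\). Since \(\tau w_{\bf j}\) is real, each factor has modulus one, so \(|({\mathcal N}_h(\tau)\psi)_{\bf j}|^2=|\psi_{\bf j}|^2\) for every \({\bf j}\). Hence the nodal density is unchanged, and in particular \(\|{\mathcal N}_h(\tau)\psi\|_h=\|\psi\|_h\).

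\textbf{Reversibility.} Set \(\phi={\mathcal N}_h(\tau)\psi\). Because \(|\phi|^2=\rho\) and \(W_h\) depends only on the density, the reverse step uses the same phase potential, \(W_h[|\phi|^2]=w\). Therefore \({\mathcal N}_h(-\tau)\phi\) has nodal values \(e^{i\tau w_{\bf j}}e^{-i\tau w_{\bf j}}\psi_{\bf j}=\psi_{\bf j}\). This gives \({\mathcal N}_h(-\tau){\mathcal N}_h(\tau)\psi=\psi\) for every real \(\tau\). The same argument with \(\tau\) replaced by \(-\tau\) gives the other order of composition, so \({\mathcal N}_h(\tau)\) is a bijection with inverse \({\mathcal N}_h(-\tau)\).

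\textbf{Method self-adjointness.} The adjoint of a one-step map \(\Phi_\tau\) is \(\Phi^*_\tau=\Phi_{-\tau}^{-1}\). By the previous step, \({\mathcal N}_h(-\tau)^{-1}={\mathcal N}_h(\tau)\), so \({\mathcal N}_h^*={\mathcal N}_h\), which is method self-adjointness in the terminology used later in the paper.

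The only delicate point is that \({\mathcal N}_h\) is nonlinear, so \({\mathcal N}_h(-\tau)\) is a different diagonal operator for each input state. Its invertibility cannot be argued at the level of a fixed matrix; it rests entirely on the phase potential being a function of \(|\psi|^2\) alone, which the first step shows is invariant. If \(X_h\) is identified with grid values, the whole argument is nodal. Otherwise one notes that the collocation map is an isomorphism onto grid functions, and no projection \(P_h\) enters.
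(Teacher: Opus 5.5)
Your proof is correct and follows essentially the same route as the paper: the unimodular diagonal multiplier leaves $|\psi|^2$ unchanged, so the reverse step uses the same real potential $W_h[|\psi|^2]$ and the two phase factors cancel nodewise. Your extra step, replacing $\tau$ by $-\tau$ to get bijectivity and hence a well-defined method adjoint ${\mathcal N}_h(-\tau)^{-1}$, is a small addition that the paper's one-line argument leaves implicit.
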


\begin{proof}
Let \(\phi={\mathcal N}_h(\tau)\psi\). Since \(|\phi|^2=|\psi|^2\), the reverse step uses the same real diagonal potential. Hence
\( {\mathcal N}_h(-\tau)\phi =\exp\!\left(i\tau\,\operatorname{diag}(W_h[|\psi|^2])\right) \exp\!\left(-i\tau\,\operatorname{diag}(W_h[|\psi|^2])\right)\psi =\psi.\)
\end{proof}

Thus, for exactly integrable density-dependent phase nonlinearities, the structural issue studied in this paper is determined by the discrete linear propagator used for \(A_h\).

\subsection{Method adjoints, local logarithms, and discrete defects}
\label{subsec:method_adjoint_prelim}

We first fix the terminology used throughout the paper. For a locally invertible one-parameter family of maps \(\Phi_h(\tau)\), its method adjoint is
\[
  \Phi_h^\dagger(\tau):=\Phi_h(-\tau)^{-1}.
\]
The dagger means the adjoint of a one-step method, not the Hilbert-space adjoint (conjugate transpose) of a matrix. We call \(\Phi_h\) \emph{method self-adjoint} (\emph{time-reversible}), if
\[
  \Phi_h^\dagger(\tau)=\Phi_h(\tau),
  \qquad\hbox{equivalently}\qquad
  \Phi_h(-\tau)\circ\Phi_h(\tau)=I .
\]
For a linear map \(L_h(\tau)\), this property should not be confused with matrix unitarity \(L_h(\tau)^*L_h(\tau)=I\): unitarity preserves the discrete norm at a fixed step, while method self-adjointness concerns reversal of the step-size parameter.

For an analytic linear propagator \(L_h(\tau)\) satisfying \(L_h(0)=I\), the principal local logarithm is well defined for sufficiently small \(|\tau|\), and we write
\[
  \log L_h(\tau)=\tau Z_{1,h}+\tau^2Z_{2,h}+\tau^3Z_{3,h}+\cdots .
\]
For a first-order approximation to the fixed-grid linear flow, \(Z_{1,h}=A_h\). The next lemma records the standard parity consequence of method self-adjointness used in geometric numerical integration
\cite{HairerLubichWanner2006,mclachlan2002splitting,BlanesCasas2016}.

\begin{lemma}[Odd local logarithm of a self-adjoint family]
\label{lem:odd_log_self_adjoint}
Let \(L_h(\tau)\) be an analytic family of nonsingular matrices near \(\tau=0\), with \(L_h(0)=I\), and suppose that \(L_h(-\tau)L_h(\tau)=I\) for all sufficiently small real \(\tau\). Then the principal local logarithm satisfies
\[
  \log L_h(-\tau)=-\log L_h(\tau).
\]
Consequently, if \(\log L_h(\tau)=\sum_{m\ge1}\tau^mZ_{m,h}\), then \(Z_{2j,h}=0\) for all \(j\ge1\).
\end{lemma}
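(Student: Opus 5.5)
We argue entirely through the matrix logarithm near the identity. Since \(L_h\) is analytic with \(L_h(0)=I\), there is \(\tau_0>0\) such that \(\|L_h(\tau)-I\|<1\) for \(|\tau|<\tau_0\). On this range the principal logarithm is given by the convergent series
\[
\log L_h(\tau)=\sum_{k\ge1}\frac{(-1)^{k+1}}{k}\bigl(L_h(\tau)-I\bigr)^k .
\]
This series is analytic in \(\tau\), vanishes at \(\tau=0\), and satisfies \(\exp(\log L_h(\tau))=L_h(\tau)\). Its Taylor expansion defines the coefficients \(Z_{m,h}\).

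The central step is to show that \(\log L_h(-\tau)\) and \(-\log L_h(\tau)\) coincide. Set \(X(\tau)=\log L_h(\tau)\). The hypothesis \(L_h(-\tau)L_h(\tau)=I\) gives
\[
L_h(-\tau)=L_h(\tau)^{-1}=\exp(-X(\tau)).
\]
Hence both \(X(-\tau)\) and \(-X(\tau)\) are logarithms of the same matrix \(L_h(-\tau)\), and both are small: each is \(O(|\tau|)\) as \(\tau\to0\). To conclude they are equal, we use the uniqueness of small logarithms. The exponential map is a local diffeomorphism at \(0\), since its differential there is the identity. So there is a neighbourhood \(U\) of \(0\) in matrix space on which \(\exp\) is injective, and its inverse on \(\exp(U)\) is exactly the principal series logarithm. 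For \(|\tau|\) small enough, both \(X(-\tau)\) and \(-X(\tau)\) lie in \(U\), so injectivity gives \(X(-\tau)=-X(\tau)\).

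An alternative is to apply the series directly to \(L_h(\tau)^{-1}\). One notes that \(\log(M^{-1})=-\log M\) for \(M\) near \(I\), which follows because both sides are analytic in \(M\) and agree on the dense set of diagonalizable \(M\), where the statement reduces to a scalar one. The local-injectivity argument is cleaner, and I would use it.

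Parity of the coefficients then follows by comparing Taylor series. Substituting the expansion into the identity \(X(-\tau)=-X(\tau)\) gives
\[
\sum_m(-1)^m\tau^mZ_{m,h}=-\sum_m\tau^mZ_{m,h}.
\]
Since the identity holds on an interval, uniqueness of power-series coefficients yields \(\bigl((-1)^m+1\bigr)Z_{m,h}=0\), so \(Z_{2j,h}=0\) for every \(j\ge1\).

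The only delicate point is the identification step. One must be sure that a matrix near \(I\) has exactly one logarithm near \(0\). This matters because \(\exp\) is not globally injective: logarithms can differ by \(2\pi i\)-shifts of eigenvalues. Shrinking \(\tau\) so that both candidate logarithms lie in the injectivity neighbourhood \(U\) resolves this. Everything else is routine.
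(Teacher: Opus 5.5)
Your proof is correct and follows the same route as the paper: write \(L_h(-\tau)=L_h(\tau)^{-1}\), identify \(\log L_h(-\tau)\) with \(-\log L_h(\tau)\) through the inversion identity for the principal logarithm near \(I\), and then compare Taylor coefficients. The only difference is that the paper cites this inversion identity as standard, whereas you prove it from the local injectivity of \(\exp\) near \(0\), which is a valid way to fill in that step.
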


\begin{proof}
For small \(|\tau|\), \(L_h(\tau)\) lies in a neighborhood of the identity on which the principal matrix logarithm is analytic. Since \(L_h(-\tau)=L_h(\tau)^{-1}\), the standard inversion identity for the principal logarithm near the identity gives \(\log L_h(-\tau)=\log(L_h(\tau)^{-1})=-\log L_h(\tau)\). Writing \(\log L_h(\tau)=\sum_{m\ge1}\tau^m Z_{m,h}\) and comparing powers of \(\tau\) yields \(Z_{2j,h}=0\), \(j \geq 1\).
\end{proof}

We use two operator defects, namely, the time-reversibility defect \(R_h(\tau)\) and the near-group defect \(G_h(\tau,\sigma)\), to test the structure of a discrete linear propagator:
\begin{align}\label{eq:reversibility_defect}
  R_h(\tau)&:=L_h(-\tau)L_h(\tau)-I, \quad\mbox{ and}\\ \label{eq:group_defect}
  G_h(\tau,\sigma)&:=L_h(\tau)L_h(\sigma)-L_h(\tau+\sigma).
\end{align}
The first measures method self-adjointness; the second measures the failure of \(L_h(\tau)\) to behave as a one-parameter group. The Baker--Campbell--Hausdorff expansion will be used below in its standard finite-dimensional form
\cite{BonfiglioliFulci2012,Hall2015}.

\begin{lemma}[Even logarithmic defects and discrete defects]
\label{lem:even_log_defects}
Let \(L_h(\tau)\) be analytic near \(\tau=0\), with \(L_h(0)=I\), and suppose that
\[
  \log L_h(\tau)=\tau A_h+\tau^2D_{2,h}+O(\tau^3).
\]
Then
\[
  L_h(-\tau)L_h(\tau)-I=2\tau^2D_{2,h}+O(\tau^3).
\]
Moreover, for sufficiently small \(\tau\) and \(\sigma\),
\[
  L_h(\tau)L_h(\sigma)-L_h(\tau+\sigma)
  =-2\tau\sigma D_{2,h}+O((|\tau|+|\sigma|)^3),
\]
and in particular
\[
  L_h(\tau)^2-L_h(2\tau)=-2\tau^2D_{2,h}+O(\tau^3).
\]
\end{lemma}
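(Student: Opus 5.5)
We work with $X(\tau):=\log L_h(\tau)$ and combine exponentials through the finite-dimensional Baker--Campbell--Hausdorff formula, truncated at second order. Since $L_h$ is analytic with $L_h(0)=I$, the principal logarithm is analytic for small $|\tau|$. We have $X(\tau)=\tau A_h+\tau^2D_{2,h}+O(\tau^3)$, and replacing $\tau$ by $-\tau$ in the same series gives $X(-\tau)=-\tau A_h+\tau^2D_{2,h}+O(\tau^3)$. For matrices $X,Y$ of size $O(\epsilon)$, BCH gives
\[
  e^Xe^Y=\exp\bigl(X+Y+\tfrac12[X,Y]+O(\epsilon^3)\bigr).
\]
The $O(\epsilon^3)$ bound is uniform on a small ball in this fixed finite-dimensional space. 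Expanding $\exp(Z)=I+Z+O(\|Z\|^2)$ then converts logarithmic information into statements about the matrices themselves.

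\textbf{Reversibility defect.} Take $X=X(-\tau)$ and $Y=X(\tau)$. Then $X+Y=2\tau^2D_{2,h}+O(\tau^3)$. The commutator term is
\[
  \tfrac12[X(-\tau),X(\tau)]=\tfrac12[-\tau A_h,\tau A_h]+O(\tau^3)=O(\tau^3),
\]
because $A_h$ commutes with itself. Hence $L_h(-\tau)L_h(\tau)=\exp(2\tau^2D_{2,h}+O(\tau^3))=I+2\tau^2D_{2,h}+O(\tau^3)$, which is the first claim.

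\textbf{Group defect.} Put $\epsilon=|\tau|+|\sigma|$. BCH gives
\[
  \log\bigl(L_h(\tau)L_h(\sigma)\bigr)=(\tau+\sigma)A_h+(\tau^2+\sigma^2)D_{2,h}+\tfrac12\tau\sigma[A_h,A_h]+O(\epsilon^3),
\]
and the commutator term vanishes. On the other hand,
\[
  \log L_h(\tau+\sigma)=(\tau+\sigma)A_h+(\tau+\sigma)^2D_{2,h}+O(\epsilon^3).
\]
Both logarithms equal $(\tau+\sigma)A_h+O(\epsilon^2)$ and differ by $\Delta=-2\tau\sigma D_{2,h}+O(\epsilon^3)$.

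To compare the exponentials, write $Z_1=Z_0+\Delta$ with $Z_0=O(\epsilon)$ and $\Delta=O(\epsilon^2)$. The Duhamel (Fréchet derivative) formula for the exponential gives
\[
  e^{Z_1}-e^{Z_0}=\int_0^1 e^{sZ_1}\,\Delta\, e^{(1-s)Z_0}\,ds .
\]
Since $e^{sZ_1}=I+O(\epsilon)$ and $e^{(1-s)Z_0}=I+O(\epsilon)$, this equals $\Delta+O(\epsilon^3)$. Therefore
\[
  L_h(\tau)L_h(\sigma)-L_h(\tau+\sigma)=-2\tau\sigma D_{2,h}+O(\epsilon^3).
\]
Setting $\sigma=\tau$ gives the final identity $L_h(\tau)^2-L_h(2\tau)=-2\tau^2D_{2,h}+O(\tau^3)$.

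\textbf{Main point of care.} The delicate step is the last one: the exponentials of the two logarithms must differ by $\Delta$ only up to third order, even though the leading parts are $O(\epsilon)$ and need not commute with $\Delta$. The Duhamel integral handles this, since each commutator correction it produces is $O(\epsilon)\cdot O(\epsilon^2)$. The remaining technical requirement is that every BCH remainder is uniform in $\tau,\sigma$ near $0$, which holds on a small neighbourhood of the identity in this finite-dimensional setting.
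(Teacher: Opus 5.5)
Your proposal is correct and follows the paper's proof: a second-order BCH expansion of $\log\bigl(L_h(-\tau)L_h(\tau)\bigr)$ and $\log\bigl(L_h(\tau)L_h(\sigma)\bigr)$, in which the $A_h$--$A_h$ commutator vanishes, followed by comparison with $\log L_h(\tau+\sigma)$ and exponentiation. The only difference is that your Duhamel formula makes explicit the step the paper covers by appealing to analyticity of the exponential, namely transferring the logarithmic difference $-2\tau\sigma D_{2,h}+O((|\tau|+|\sigma|)^3)$ to the matrix difference.
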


\begin{proof}
Let \(Z(\tau)=\log L_h(\tau)\). Since
\[
  Z(-\tau)=-\tau A_h+\tau^2D_{2,h}+O(\tau^3),
\]
the BCH formula gives
\[
  \log(L_h(-\tau)L_h(\tau))
  =Z(-\tau)+Z(\tau)+\frac12[Z(-\tau),Z(\tau)]+O(\tau^3)
  =2\tau^2D_{2,h}+O(\tau^3),
\]
because the possible second-order commutator \([-\tau A_h,\tau A_h]\) vanishes. Exponentiating gives the reversibility-defect expansion. Similarly,
\[
  \log(L_h(\tau)L_h(\sigma))
  =(\tau+\sigma)A_h+(\tau^2+\sigma^2)D_{2,h}
    +O((|\tau|+|\sigma|)^3),
\]
whereas
\[
  \log L_h(\tau+\sigma)
  =(\tau+\sigma)A_h+(\tau+\sigma)^2D_{2,h}
    +O((|\tau|+|\sigma|)^3).
\]
The logarithmic difference is therefore
\(-2\tau\sigma D_{2,h}+O((|\tau|+|\sigma|)^3)\), and analyticity of the exponential map transfers this leading term to the matrix difference.
\end{proof}

\begin{remark}[A finite-dimensional prototype]
\label{rem:finite_dim_prototype}
Let \(X\) and \(Y\) be noncommuting skew-Hermitian matrices. The exact flow \(e^{\tau(X+Y)}\) is a reversible unitary group. The Lie products \(e^{\tau X}e^{\tau Y}\) and \(e^{\tau Y}e^{\tau X}\) are unitary and first-order consistent with the exact flow, but
\[
  \log(e^{\tau X}e^{\tau Y})
  =\textstyle \tau(X+Y)+\frac{\tau^2}{2}[X,Y]+O(\tau^3),\quad [X,Y]=XY-YX \ne 0,
\]
so they generally have an even local-logarithmic defect and are not method self-adjoint. By contrast, the palindromic products
\[
  e^{\tau X/2}e^{\tau Y}e^{\tau X/2},
  \qquad
  e^{\tau Y/2}e^{\tau X}e^{\tau Y/2},
\]
need not be exact, but they are method self-adjoint and have odd local logarithms. Thus following the exact flow is sufficient but not necessary; the key structural property for robust high-order symmetric composition is method self-adjointness.
\end{remark}

\subsection{Self-adjoint linear propagators, Strang blocks, and high-order composition}
\label{subsec:strang_composition_prelim}

Let \(L_h(\tau)\) be a discrete linear propagator approximating the fixed-grid linear flow generated by \(A_h\). We do not require \(L_h(\tau)=e^{\tau A_h}\). The structural assumptions used below are first-order consistency,
\[
  L_h(\tau)=I+\tau A_h+O(\tau^2),
\]
and method self-adjointness,
\[
  L_h(-\tau)L_h(\tau)=I.
\]
Given \(L_h\) and the nonlinear phase flow \({\mathcal N}_h(\tau)\), define the Strang block
\[
  S_{2,h}^L(\tau):=
  L_h(\tau/2)\circ{\mathcal N}_h(\tau)\circ L_h(\tau/2),
\]
where the superscript \(L\) is omitted when the linear propagator is clear. %We have the following two elementary results needed for high-order composition. 

Since \((\Phi\circ\Psi)^\dagger=\Psi^\dagger\circ\Phi^\dagger\), if both \(L_h\) and \({\mathcal N}_h\) are method self-adjoint, then the Strang block \(S^L_{2,h}(\tau)=L_h(\tau/2)\circ{\mathcal N}_h(\tau)\circ L_h(\tau/2)\) is method self-adjoint. If \(L_h\) and \({\mathcal N}_h\) are first-order consistent with \(A_h\) and \(B_h\) in \eqref{eq:semidissys}, respectively, then \(S^L_{2,h}\) is first-order consistent with the exact flow; by the standard even-order property of self-adjoint one-step methods for smooth autonomous systems, it is second-order accurate.

The same fixed-grid structure is the basis for classical higher-order symmetric compositions
\cite{Strang1968,ForestRuth1990,Suzuki1990,Yoshida1990,
HairerLubichWanner2006,mclachlan2002splitting,blanes2008splitting,BlanesCasas2016}.
Suppose that, for the fixed spatial grid under consideration, the self-adjoint second-order block has the local logarithmic expansion
\[
  \log S_{2,h}^L(\tau)
  =\tau F_h+\tau^3E_{3,h}+\tau^5E_{5,h}+O(\tau^7).
\]
Then the standard symmetric composition conditions cancel the corresponding odd modified-logarithmic terms.
In particular, Yoshida's fourth-order composition and the standard seven-stage sixth-order composition cancel the \(E_{3,h}\) and the \((E_{3,h},E_{5,h})\) contributions, respectively \cite{Yoshida1990}.
Thus, on a fixed grid and on time intervals where the required local expansions remain valid, the composed schemes attain their design temporal orders.
The purpose here is not to reprove this general framework, but to identify which fully discrete linear propagators inside the Strang block possess the self-adjointness and odd-logarithm structure required to realize the designed order.

\section{State-dependent defect analysis of discrete EEI}
\label{sec:eei_defect}

We analyze the discrete EEI linear propagator obtained by applying the continuous EEI factorization stagewise on a fixed Fourier pseudospectral grid. The continuous factorization is exact before spatial discretization. After fixed-grid Fourier representation, however, the stage composition need not equal the exact exponential of the semidiscrete linear generator, nor inherit method self-adjointness or the odd local-logarithm parity required by high-order symmetric composition. The resulting defect is state-dependent, and its visibility can be traced to the first EEI stage.

\subsection{The continuous EEI formula and the discrete EEI map}
\label{subsec:eei_formula}

For the linear subproblem \(i\partial_t\psi=\left(-\frac12\Delta-\Omega\cdot \mathbf{L}\right)\psi\), the continuous flow operator is \(E(\tau)=\exp\!\left(i\tau(\Delta/2+\Omega\cdot L)\right)\). 
Following the EEI factorization  \cite{bernier2021exact,bernier2021kinetic}, as specialized to arbitrary-angle rotational dipolar BECs in \cite{LiuZhang2025}, this flow can be written as
\begin{align}
E(\tau)
&=e^{-i\tau(\zeta_1x^2+\zeta_2y^2+\zeta_3z^2)}
  e^{-\tau(\xi_1y+\xi_2z)\partial_x}
  e^{-\tau\xi_3z\partial_y} \nonumber\\
&\quad \times
  e^{i\tau\nabla^T\mathsf K(\tau)\nabla}
  e^{-\tau\eta_1x\partial_y}
  e^{-\tau(\eta_2x+\eta_3y)\partial_z}
  e^{-i\tau{\bf x}^T\mathsf P(\tau){\bf x}},
\label{eq:continuous_eei_formula}
\end{align}
where \({\bf x}=(x,y,z)^T\), the coefficients \(\zeta_j,\xi_j,\eta_j\) and the real matrices \(\mathsf K(\tau)\), \(\mathsf P(\tau)\) depend on \(\Omega\) and \(\tau\), and explicit formulas in the rotational dipolar BEC setting are given in \cite{LiuZhang2025}. We assume throughout this local analysis that these coefficients are smooth for small \(\tau\) in a nonsingular parameter regime.

The product in \eqref{eq:continuous_eei_formula} acts from right to left. We write
\( E(\tau)=Q_7(\tau)\cdots Q_1(\tau)\), where 
\begin{align*}
Q_1(\tau)&=e^{-i\tau q_\tau({\bf x})},\qquad
  q_\tau({\bf x})={\bf x}^T\mathsf P(\tau){\bf x},\\
Q_2(\tau)&=e^{-\tau(\eta_2x+\eta_3y)\partial_z},\qquad
Q_3(\tau)=e^{-\tau\eta_1x\partial_y},\qquad
Q_4(\tau)=e^{i\tau\nabla^T\mathsf K(\tau)\nabla},\\
Q_5(\tau)&=e^{-\tau\xi_3z\partial_y},\qquad
Q_6(\tau)=e^{-\tau(\xi_1y+\xi_2z)\partial_x},\qquad
Q_7(\tau)=e^{-i\tau r_\tau({\bf x})},
\end{align*}
with \(r_\tau({\bf x})=\zeta_1x^2+\zeta_2y^2+\zeta_3z^2\).

Let \(X_h=\operatorname{span}\{e^{ik\cdot{\bf x}}:k\in{\mathcal K}_h\}\) be the finite trigonometric space associated with the Fourier grid. We use \(P_h\) for the \(L^2({\mathcal D})\)-orthogonal Fourier projection onto \(X_h\), and \({\mathcal I}_h\) for the trigonometric interpolation operator associated with the nodal grid. Thus \({\mathcal I}_hf\in X_h\) is the trigonometric polynomial whose nodal values agree with those of \(f\) on the grid.
The implemented fixed-grid EEI map is the stage product \(E_h(\tau):=Q_{7,h}(\tau)\cdots Q_{1,h}(\tau)\), where \(Q_{j,h}(\tau)\) denotes the Fourier pseudospectral collocation matrix associated with the \(j\)th continuous EEI stage. 

For example, for the first quadratic phase stage,
\[
  Q_{1,h}(\tau)v_h= \textstyle {\mathcal I}_h\!\left(e^{-i\tau q_\tau(\mathbf{x})}v_h\right),\qquad v_h\in X_h .
\]
In nodal variables this is multiplication by the unit-modulus vector \(e^{-i\tau q_\tau({\bf x}_\nu)}\), and hence it is unitary in the discrete \(L^2\) norm. The Fourier multiplier and shear stages are realized similarly by unitary FFT-based matrices.

The projection \(P_h\) is not used here to define the implemented stage matrix. Instead, it is used to expose the unresolved Fourier-tail mechanism. When a continuous stage \(Q_j(\tau)\) produces a function outside \(X_h\), a projected representation would discard the component \((I-P_h)Q_j(\tau)v_h\), whereas a nodal collocation representation aliases this component back into the resolved band through \({\mathcal I}_h\). Thus projection and interpolation differ in how unresolved modes are represented, but both reflect the same non-invariance \(Q_j(\tau)X_h\not\subset X_h\). This loss of the continuous intermediate-stage algebra is the source of the fixed-grid commutator mismatch studied below.

\subsection{First-stage Fourier tail and state separation}
\label{subsec:first_stage_tail}

The first actual stage in \eqref{eq:continuous_eei_formula} is the rightmost quadratic phase multiplier \(Q_1(\tau)=e^{-i\tau q_\tau}\), with \(q_\tau({\bf x})={\bf x}^T\mathsf P(\tau){\bf x}\). This stage already separates states according to whether the first-stage unresolved Fourier content is visible or masked. Assume that \(\mathsf P(\tau)=\mathsf P_0+O(\tau)\) for small \(\tau\), and set \(q_0({\bf x})={\bf x}^T\mathsf P_0{\bf x}\).

\begin{lemma}[First-stage Fourier-tail expansion]
\label{lem:first_stage_tail}
Let \(P_h\) be the \(L^2({\mathcal D})\)-Fourier projection onto \(X_h\). For \(v_h\in X_h\),
\begin{equation}
  (I-P_h)Q_1(\tau)v_h=-i\tau(I-P_h)(q_0v_h)+O(\tau^2)
  \label{eq:first_stage_tail}
\end{equation}
in \(L^2({\mathcal D})\), with remainder bounded by \(C\tau^2\|v_h\|_{L^2({\mathcal D})}\). Hence, if \((I-P_h)(q_0v_h)\ne0\),
\[
  \|(I-P_h)Q_1(\tau)v_h\|_{L^2({\mathcal D})}
  =\tau\|(I-P_h)(q_0v_h)\|_{L^2({\mathcal D})}+O(\tau^2).
\]
\end{lemma}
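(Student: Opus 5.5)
The plan is a pointwise Taylor expansion of the unit-modulus multiplier \(Q_1(\tau)=e^{-i\tau q_\tau}\) on the bounded box \({\mathcal D}\), followed by application of the bounded projection \(I-P_h\). Since \({\mathcal D}\) is a bounded rectangular box and \(\mathsf P(\tau)=\mathsf P_0+O(\tau)\) depends smoothly on \(\tau\), the quadratic forms satisfy
\[
\sup_{{\bf x}\in{\mathcal D}}|q_\tau({\bf x})|\le C_q,
\qquad
\sup_{{\bf x}\in{\mathcal D}}|q_\tau({\bf x})-q_0({\bf x})|\le C\tau
\]
for small \(|\tau|\). These bounds use only \(\|\mathsf P(\tau)-\mathsf P_0\|\le C\tau\) and \(|{\bf x}|\le\operatorname{diam}{\mathcal D}\).

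Next I will write, pointwise in \({\bf x}\),
\[
e^{-i\tau q_\tau({\bf x})}=1-i\tau q_0({\bf x})+\rho_\tau({\bf x}),
\qquad
\rho_\tau=\bigl(e^{-i\tau q_\tau}-1+i\tau q_\tau\bigr)-i\tau(q_\tau-q_0).
\]
The elementary inequality \(|e^{i\theta}-1-i\theta|\le\theta^2/2\) for real \(\theta\) bounds the first bracket by \(\tau^2C_q^2/2\). The second term is bounded by \(C\tau^2\). Hence \(\|\rho_\tau\|_{L^\infty({\mathcal D})}\le C'\tau^2\), uniformly in \({\bf x}\) and independent of \(h\).

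Multiplying by \(v_h\) gives
\[
Q_1(\tau)v_h=v_h-i\tau q_0v_h+\rho_\tau v_h .
\]
I then apply \(I-P_h\). Because \(v_h\in X_h\), we have \((I-P_h)v_h=0\). Because \(P_h\) is an orthogonal projection, \(\|I-P_h\|_{L^2\to L^2}\le1\). Therefore
\[
\|(I-P_h)(\rho_\tau v_h)\|_{L^2}\le\|\rho_\tau\|_{L^\infty}\|v_h\|_{L^2}\le C'\tau^2\|v_h\|_{L^2},
\]
which is exactly \eqref{eq:first_stage_tail} with the stated remainder bound.

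For the norm statement, I will use the reverse triangle inequality:
\[
\Bigl|\|(I-P_h)Q_1(\tau)v_h\|-\|\tau(I-P_h)(q_0v_h)\|\Bigr|\le C'\tau^2\|v_h\|.
\]
This gives \(\|(I-P_h)Q_1(\tau)v_h\|=|\tau|\,\|(I-P_h)(q_0v_h)\|+O(\tau^2)\), which reads as stated for \(\tau>0\). The hypothesis \((I-P_h)(q_0v_h)\ne0\) is what makes the linear term the genuine leading term.

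The only delicate point is making sure the constant is uniform, so the remainder is independent of \(h\) and of \(v_h\) beyond \(\|v_h\|\). This follows from boundedness of the domain and the \(L^\infty\) control of the multiplier. If \({\mathcal D}\) were centered elsewhere, the argument is unchanged, since only \(\sup_{\mathcal D}|{\bf x}|<\infty\) is needed.
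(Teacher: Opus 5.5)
Your proposal is correct and follows essentially the same route as the paper: an $L^\infty(\mathcal D)$ Taylor expansion $e^{-i\tau q_\tau}=1-i\tau q_0+O(\tau^2)$ on the bounded box, followed by applying $I-P_h$ and using $(I-P_h)v_h=0$. You make explicit what the paper leaves implicit: the $h$-independent remainder bound via $\|I-P_h\|\le 1$, the reverse triangle inequality for the norm statement, and the fact that the leading term is really $|\tau|$ rather than $\tau$.
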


\begin{proof}
Since \(q_\tau=q_0+O(\tau)\) in \(L^\infty({\mathcal D})\),
\(e^{-i\tau q_\tau}=1-i\tau q_0+O(\tau^2)\). As \(v_h\in X_h\), \((I-P_h)v_h=0\). Applying \(I-P_h\) to \(Q_1(\tau)v_h=e^{-i\tau q_\tau}v_h\) gives \eqref{eq:first_stage_tail}, and the norm statement follows.
\end{proof}

The corresponding collocation aliasing residual has the same first-order source. Since \(P_hv_h={\mathcal I}_hv_h=v_h\), on the fixed grid we have
\[
  ({\mathcal I}_h-P_h)Q_1(\tau)v_h
  =
  -i\tau({\mathcal I}_h-P_h)(q_0v_h)+O(\tau^2).
\]
Here \(I\) denotes the identity operator, while \({\mathcal I}_h\) denotes nodal trigonometric interpolation. Thus \((I-P_h)(q_0v_h)\) measures the unresolved Fourier content generated by the first stage, whereas \(({\mathcal I}_h-P_h)(q_0v_h)\) is the corresponding collocation aliasing contribution represented back in \(X_h\). These two quantities are not identical in norm, since aliasing may involve cancellations, but they are generated by the same Fourier coefficients outside the resolved band. Thus the source strength of the first-stage fixed-grid representation mismatch is measured by
\[
  {\mathcal T}_h(v_h):=\|(I-P_h)(q_0v_h)\|_{L^2({\mathcal D})}.
\]
Accordingly, \({\mathcal T}_h(v_h)\) is a source-strength diagnostic rather than a norm defect of the unitary collocation stage in implementation. It measures, to leading order, the Fourier content of \(q_0v_h\) outside \(X_h\), which is truncated under projection and aliased back under collocation. We call the mechanism masked when \({\mathcal T}_h(v_h)\) is at the spatial resolution floor, and visible when it decays only algebraically in \(N_h\asymp h^{-1}\) or remains \(O(1)\). The next three lemmas evaluate this criterion for a few important probe classes.

\begin{lemma}[Algebraic first-stage Fourier tail for the constant state]
\label{lem:constant_tail}
Let \(1_h\equiv1\in X_h\). Then
\[
  \|(I-P_h)Q_1(\tau)1_h\|_{L^2({\mathcal D})}
  =\tau\|(I-P_h)q_0\|_{L^2({\mathcal D})}+O(\tau^2).
\]
If \(q_0({\bf x})={\bf x}^T\mathsf P_0{\bf x}\) contains a nonzero pure quadratic component, then
\[
  \|(I-P_h)q_0\|_{L^2({\mathcal D})}\ge cN_h^{-3/2}
\]
for all sufficiently fine grids. If \(q_0\) contains only cross terms, its Fourier tail is still algebraic rather than spectral. Hence the constant state is a visible first-stage probe whenever \(q_0\) is a nonzero coordinate quadratic polynomial.
\end{lemma}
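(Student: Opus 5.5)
The first identity is the special case \(v_h=1_h\) of Lemma~\ref{lem:first_stage_tail}. Since \(1_h\in X_h\) and \(q_0\cdot 1_h=q_0\), that lemma gives
\[
  \|(I-P_h)Q_1(\tau)1_h\|_{L^2({\mathcal D})}=\tau\|(I-P_h)q_0\|_{L^2({\mathcal D})}+O(\tau^2),
\]
with nothing further to prove. The content of the statement is therefore the lower bound on the Fourier tail of the non-periodic polynomial \(q_0\) on the box \({\mathcal D}=\prod_d[a_d,b_d]\). I will compute this tail by separation of variables.

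\textbf{Reduction to one-dimensional Fourier coefficients.} Write \(q_0=\sum_d p_{dd}x_d^2+\sum_{d\ne e}p_{de}x_dx_e\). The Fourier basis on \({\mathcal D}\) is a tensor product, so the coefficient of \(x_d^2\) at a multi-index \(k\) is nonzero only when \(k_e=0\) for \(e\ne d\). In that case it equals \(\widehat{(x^2)}(k_d)\) times a constant. Integrating by parts twice on \([a,b]\) with length \(\ell=b-a\) gives, for \(k\ne0\),
\[
  \widehat{(x^2)}(k)=\frac{c_1(b^2-a^2)}{ik}+\frac{c_2(b-a)}{k^2},
\]
with explicit nonzero constants. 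The leading term is of size \(|k|^{-1}\) unless \(a=-b\), which is the symmetric box typical in practice. In the symmetric case the \(k^{-1}\) term vanishes, but the \(k^{-2}\) term survives with coefficient proportional to \(\ell\neq0\), because \(x^2\) has a derivative jump \(2b-2a\) across the periodic seam.

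\textbf{Tail lower bound.} The pure-axis modes with \(|k_d|>N_h/2\) lie outside \({\mathcal K}_h\). They are mutually orthogonal, and orthogonal across different axes \(d\) as well. Cross terms \(x_dx_e\) only contribute modes with two nonzero components, so they cannot cancel the pure-axis coefficients. By Parseval,
\[
  \|(I-P_h)q_0\|^2\ \ge\ \sum_d |p_{dd}|^2 C\sum_{|k|>N_h/2}|\widehat{(x_d^2)}(k)|^2 .
\]
In the worst (symmetric) case each term is of order \(|k|^{-4}\), so the tail sum is bounded below by \(cN_h^{-3}\). This gives \(\|(I-P_h)q_0\|\ge cN_h^{-3/2}\) whenever some \(p_{dd}\ne0\). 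In the nonsymmetric case the bound is even larger, of order \(N_h^{-1/2}\), so \(N_h^{-3/2}\) is a uniform lower bound.

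\textbf{Cross terms only.} The coefficients of \(x_dx_e\) factor as \(\widehat{x}(k_d)\widehat{x}(k_e)\), and \(\widehat{x}(k)\sim\ell/(ik)\) is never zero for \(k\neq0\). Restricting to modes with \(k_e=0\) and \(|k_d|>N_h/2\) gives a tail of size \(\gtrsim N_h^{-1/2}\) when \(\int x_e\ne0\). In general, using both indices large gives at least the order \(N_h^{-1}\). Either way the decay is algebraic, not spectral. Combining the two cases, the mechanism is visible in the sense defined before the lemma.

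\textbf{Main obstacle.} The main obstacle is keeping the constants honest. For the pure-square case I must rule out the \(k^{-2}\) coefficient vanishing, which I will do via the nonzero derivative jump \(2(b-a)\). I must also rule out cross-axis cancellation among the \(p_{dd}\), which I will do via the orthogonality of modes supported on different axes.
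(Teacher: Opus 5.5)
Your route is the same as the paper's. You reduce to the tail of \(q_0\) via Lemma~\ref{lem:first_stage_tail}, use the explicit one-dimensional Fourier coefficients of \(x^2\), sum \(k^{-4}\) beyond the cutoff to get \(N_h^{-3/2}\), and treat cross terms as products of one-dimensional expansions. The paper simply quotes the series on \([-L,L]\), with coefficients \(2L^2(-1)^n/(\pi^2n^2)\). On a centered box, which is the paper's setting, your argument is correct and more careful than the paper's, since you explicitly exclude cancellation between different axes by orthogonality.

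The gap is in your extension to a general box \(\prod_d[a_d,b_d]\). There, the claim that cross terms \(x_dx_e\) only feed modes with two nonzero components is false. The mode with \(k_d\neq0\) and all other components zero receives \(\widehat{x_d}(k_d)\,(a_e+b_e)/2\), which is exactly what you use yourself in the cross-term paragraph (``when \(\int x_e\ne0\)''). So your displayed Parseval inequality, with only \(|p_{dd}|^2|\widehat{(x_d^2)}(k)|^2\) on the right, is not justified. Your assertion that an off-centered box gives an \(N_h^{-1/2}\) bound from the pure-axis \(k^{-1}\) term can also fail. For \(q_0=x^2-2xy\) on \([0,1]^2\) (times any \(z\)-interval), the pure-axis coefficients are those of the \(y\)-average \(x^2-x\), which are exactly \(1/(2\pi^2k^2)\).

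The \(N_h^{-3/2}\) bound still survives once you track phases. With \(\ell_d=b_d-a_d\) and normalization \(\ell_d^{-1}\int\), the pure-axis coefficient is
\[
  e^{-2\pi i k a_d/\ell_d}\Big[\frac{i\ell_d}{2\pi k}\Big(p_{dd}(a_d+b_d)+\sum_{e\ne d}(p_{de}+p_{ed})\frac{a_e+b_e}{2}\Big)+\frac{p_{dd}\ell_d^2}{2\pi^2k^2}\Big].
\]
Since \(\mathsf P_0\) is real, the \(k^{-1}\) part is purely imaginary and the \(k^{-2}\) part is real. Hence the modulus is at least \(|p_{dd}|\ell_d^2/(2\pi^2k^2)\), and your Parseval step goes through. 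Note that your ``constants'' \(c_1,c_2\) must carry this common \(k\)-dependent unimodular factor unless \(a_d=0\).

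Your \(N_h^{-1/2}\) claim for off-centered boxes can also be rescued. If the \(k^{-1}\) parts cancel, some cross term must be present. Its modes with two nonzero components are fed by no other term of \(q_0\), and they already give an \(N_h^{-1/2}\) tail. Alternatively, simply restrict to the centered box, as the paper does.
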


\begin{proof}
The first identity follows from Lemma~\ref{lem:first_stage_tail}. For a pure quadratic component, it suffices to recall that on \([-L,L]\),
\( x^2=\frac{L^2}{3}+\sum_{n\ne0}\frac{2L^2(-1)^n}{\pi^2n^2}e^{in\pi x/L}\). Thus the unresolved tail of a nonzero \(x^2\) component is comparable to \(\big(\sum_{|n|>N_h}n^{-4}\big)^{1/2}\), giving the lower bound \(cN_h^{-3/2}\); the same argument applies to \(y^2\) and \(z^2\). If only cross terms remain, e.g. \(xy,xz,yz\), their periodic Fourier coefficients still decay only algebraically, since they are products of one-dimensional coordinate expansions. Therefore a nonzero coordinate quadratic polynomial is not spectrally resolved as a periodic function.
\end{proof}

This explains why \(1_h\) is a sensitive EEI probe. Although it is the lowest Fourier mode and the exact fixed-grid linear flow satisfies \(e^{\tau A_h}1_h=1_h\), the first EEI phase produces the leading perturbation \(q_0\cdot1_h=q_0\), a nonperiodic coordinate quadratic polynomial with algebraic Fourier tails.

\begin{lemma}[First-stage Fourier tail for Fourier modes]
\label{lem:fourier_mode_tail}
Let \(v_k({\bf x})=e^{ik\cdot{\bf x}}\in X_h\), and let \(\widehat q_0(m)\) denote the Fourier coefficients of the periodic extension of \(q_0\). Then
\[
  \|(I-P_h)Q_1(\tau)v_k\|_{L^2({\mathcal D})} = \textstyle \tau\left(\sum_{m\notin{\mathcal K}_h}|\widehat q_0(m-k)|^2\right)^{1/2}
   +O(\tau^2),
\]
up to the fixed normalization of the Fourier basis. For fixed low-frequency \(k\), this tail is algebraic in \(N_h\); if \(k\) lies near the boundary of \({\mathcal K}_h\), the coefficient-level tail can be \(O(1)\).
\end{lemma}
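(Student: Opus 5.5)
We apply Lemma~\ref{lem:first_stage_tail} with \(v_h=v_k\); this reduces the statement to computing \(\|(I-P_h)(q_0v_k)\|_{L^2({\mathcal D})}\). The remainder \(O(\tau^2)\) is inherited directly, with constant \(C\|v_k\|_{L^2}\), which is a fixed normalization constant.

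The key algebraic step is a Fourier shift identity. Expand the periodic extension of \(q_0\) as
\[
q_0=\sum_{m}\widehat q_0(m)e^{im\cdot{\bf x}},
\]
with \(m\) ranging over the lattice dual to \({\mathcal D}\). Since \(k\) lies on the same lattice, multiplication by \(e^{ik\cdot{\bf x}}\) just shifts indices:
\[
q_0v_k=\sum_{m}\widehat q_0(m-k)e^{im\cdot{\bf x}}.
\]
The coefficient sequence is square summable because \(q_0\in L^2\), so the series converges in \(L^2\). Applying \(I-P_h\) keeps only the indices \(m\notin{\mathcal K}_h\). Parseval, with the fixed basis normalization \(\|e^{im\cdot{\bf x}}\|^2=|{\mathcal D}|\), then gives the displayed formula up to that constant factor.

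For the qualitative claims, I use the one-dimensional expansion from the proof of Lemma~\ref{lem:constant_tail}. For a pure square on \([-L,L]\), \(|\widehat{x^2}(n)|\sim n^{-2}\). For a cross term such as \(xy\), the coefficients are products of the coordinate expansions of \(x\) and \(y\), which decay like \(n^{-1}\). Hence \(|\widehat q_0(m)|\lesssim \prod_j(1+|m_j|)^{-1}\) along coordinate directions, and these coefficients have no exponential decay.

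\textbf{Fixed low-frequency \(k\).} If \(m\notin{\mathcal K}_h\), then some component satisfies \(|m_j|>N_h\). Consequently \(|m_j-k_j|\ge N_h-|k_j|\gtrsim N_h\). Summing the squared coefficients over this region gives a power of \(N_h\), which yields an algebraic upper bound. For a lower bound, I restrict to a single axis of a nonzero pure quadratic component, exactly as in Lemma~\ref{lem:constant_tail}. This gives a contribution of size \(\gtrsim N_h^{-3/2}\), so the tail is algebraic and not spectral.

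\textbf{\(k\) near the boundary of \({\mathcal K}_h\).} Choose \(m=k+e_j\) with \(m\notin{\mathcal K}_h\). The sum then contains the term \(|\widehat q_0(e_j)|^2\), which is independent of \(N_h\). It is nonzero whenever \(q_0\) has a nonzero pure quadratic or linear-type component in direction \(j\); for \(x^2\), for example, \(\widehat{x^2}(\pm1)\neq0\). Therefore the tail is \(O(1)\).

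The main obstacle is this last nondegeneracy claim, since the \(O(1)\) conclusion requires at least one low coefficient \(\widehat q_0(e_j)\) to be nonzero. I would verify this from the explicit one-dimensional expansions; the phrasing "can be \(O(1)\)" permits stating it under the same nonzero-component hypothesis as in Lemma~\ref{lem:constant_tail}.
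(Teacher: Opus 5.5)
Your proposal is correct and follows essentially the same route as the paper: reduce to \(\|(I-P_h)(q_0v_k)\|_{L^2({\mathcal D})}\) via Lemma~\ref{lem:first_stage_tail}, use the shift identity \(\widehat{q_0v_k}(m)=\widehat q_0(m-k)\) with Parseval, and then read off algebraic decay for fixed low \(k\) and an \(O(1)\) contribution from a low coefficient shifted past the cutoff when \(k\) is near the boundary. Your explicit upper and lower algebraic bounds and the choice \(m=k+e_j\) make the paper's qualitative remarks quantitative; one small correction is that \(q_0\) has no linear component, and on a centered box \(\widehat q_0(e_j)\neq0\) holds exactly when \((\mathsf P_0)_{jj}\neq0\), because the cross terms vanish at \(e_j\), so for purely off-diagonal \(q_0\) you would shift \(\ell=e_i+e_j\) instead.
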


\begin{proof}
Multiplication by \(v_k=e^{ik\cdot{\bf x}}\) shifts Fourier coefficients:
\[
  \widehat{q_0v_k}(m)=\widehat q_0(m-k).
\]
So the unresolved part of \(q_0v_k\) is the shifted spectrum of \(q_0\) outside \({\mathcal K}_h\), and Lemma~\ref{lem:first_stage_tail} gives the stated expansion. Since \(q_0\) has only algebraic Fourier decay as a periodic coordinate quadratic function, fixed low modes inherit an algebraic tail. If \(k\) is near the cutoff, a fixed low coefficient \(\widehat q_0(\ell)\) may be shifted to an unresolved index \(k+\ell\notin{\mathcal K}_h\), giving an \(O(1)\) contribution.
\end{proof}

%Next we identify a common class of data that masks the first-stage residual.

\begin{lemma}[Spectral-floor Fourier tail for Gaussian-polynomial data]
\label{lem:gaussian_tail}
Let \(v({\bf x})=p({\bf x})e^{-\alpha|{\bf x}|^2}\), where \(p\) is a fixed polynomial and \(\alpha>0\), and let \(v_h=P_hv\). Then there exist constants \(C_1,C_2,c,M>0\), depending on \(p,q_0,\alpha\) and the box shape but not on \(h\), such that
\[
  \|(I-P_h)(q_0v_h)\|_{L^2({\mathcal D})}
  \le C_1e^{-c(N_h^{\rm cut})^2}+C_2L^Me^{-\alpha L^2},
\]
where \(N_h^{\rm cut} \sim h^{-1}\) is the Fourier cutoff and \(L\) is a representative half-width of the box. Hence,
\[
  \|(I-P_h)Q_1(\tau)v_h\|_{L^2({\mathcal D})}
  \le C\tau\left(e^{-c(N_h^{\rm cut})^2}+L^Me^{-\alpha L^2}\right)+O(\tau^2).
\]
\end{lemma}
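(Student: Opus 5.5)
The plan is to compare \(q_0v_h\) with the smooth, rapidly decaying function \(w:=q_0v\), whose periodic extension is nearly smooth, and to split the unresolved tail into a truncation piece and a periodization piece. Since \(I-P_h\) is an \(L^2\)-orthogonal projection with norm at most one,
\[
  \|(I-P_h)(q_0v_h)\|_{L^2({\mathcal D})}
  \le \|(I-P_h)(q_0v)\|_{L^2({\mathcal D})}
     +\|q_0\|_{L^\infty({\mathcal D})}\|(I-P_h)v\|_{L^2({\mathcal D})}.
\]
The second term reduces to the Fourier tail of \(v\) itself. The first term is the Fourier tail of \(q_0v=\tilde p\,e^{-\alpha|{\bf x}|^2}\), where \(\tilde p=q_0p\) is again a fixed polynomial. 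So both terms are tails of Gaussian-polynomial functions, and it suffices to prove the bound for a generic \(g=r({\bf x})e^{-\alpha|{\bf x}|^2}\) with \(r\) polynomial. The factor \(\|q_0\|_{L^\infty}\lesssim L^2\) is absorbed into \(L^M\) and \(C_1\).

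For such a \(g\), I will introduce a smooth periodic comparison function. Let \(g_{\rm per}=\sum_{n\in{\mathbb Z}^3}g(\cdot+2Ln)\) be the periodization, with the obvious anisotropic modification for a rectangular box. Then
\[
  (I-P_h)g=(I-P_h)g_{\rm per}+(I-P_h)(g-g_{\rm per}),
\]
and the second piece is bounded by \(\|g-g_{\rm per}\|_{L^2({\mathcal D})}\). On \({\mathcal D}\), each translate with \(n\ne0\) is evaluated at points with \(|{\bf x}+2Ln|\ge L\), so a direct summation gives \(\le C L^{M}e^{-\alpha L^2}\), with \(M\) depending on \(\deg r\). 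This is the \(C_2L^Me^{-\alpha L^2}\) term, which accounts for the fact that \(g\) is not genuinely periodic on the box.

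The first piece uses Poisson summation. The Fourier coefficients of \(g_{\rm per}\) are samples \(\widehat g(\pi m/L)\) of the continuous Fourier transform, up to normalization. Here \(\widehat g\) is a polynomial in \(\xi\) times \(e^{-|\xi|^2/(4\alpha)}\). Summing \(|\widehat g(\pi m/L)|^2\) over \(m\notin{\mathcal K}_h\), that is over \(|m|_\infty>N_h^{\rm cut}\), gives a Gaussian tail \(\le C_1e^{-c(N_h^{\rm cut})^2}\). The constant \(c\sim \pi^2/(4\alpha L^2)\) depends on the box, which the statement allows, and the polynomial prefactor is absorbed by slightly reducing \(c\).

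The expansion for \(Q_1\) then follows immediately from Lemma~\ref{lem:first_stage_tail} applied to \(v_h\in X_h\), with the remainder bounded by \(C\tau^2\|v_h\|\).

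I expect the main obstacle to be bookkeeping in the periodization estimate rather than any conceptual difficulty. Specifically, the sum over translates and the \(L\)-dependence of constants must be controlled uniformly, and the \(L^2\) error must be handled without the nonperiodic boundary mismatch producing algebraic tails. The splitting above is designed to avoid that: the algebraic-tail mechanism of Lemma~\ref{lem:constant_tail} enters only through \(g-g_{\rm per}\), which is exponentially small in \(L^2\), so it never affects the spectral decay.
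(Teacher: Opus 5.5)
Your proposal is correct and follows essentially the same route as the paper: you replace \(v_h\) by \(v\) at the cost of \(\|q_0\|_{L^\infty}\|(I-P_h)v\|_{L^2}\), then bound the tail of the Gaussian-polynomial function \(q_0v\) by a Gaussian spectral term plus a periodization/boundary term. Your explicit splitting \(g=g_{\rm per}+(g-g_{\rm per})\) with Poisson summation makes rigorous the paper's brief passage from the whole-space transform \(\widehat w(\xi)=\mathcal P(\xi)e^{-|\xi|^2/(4\alpha)}\) to the periodic Fourier coefficients on the box.
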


\begin{proof}
The product \(w=q_0v\) is again a polynomial times a Gaussian. On the whole space, \(\widehat w(\xi)=\mathcal P(\xi)e^{-|\xi|^2/(4\alpha)}\), so its Fourier tail beyond \(N_h^{\rm cut}\) is bounded by \(Ce^{-c(N_h^{\rm cut})^2}\). Restriction to a finite periodic box contributes a boundary or periodization error bounded by \(CL^Me^{-\alpha L^2}\), after enlarging \(M\) to account for the polynomial factor and finitely many derivatives. Replacing \(v\) by \(v_h=P_hv\) adds
\[
  \|(I-P_h)(q_0(v_h-v))\|_{L^2}
  \le C\|v_h-v\|_{L^2},
\]
which satisfies the same spectral-tail and boundary-tail estimate. The first-stage bound then follows from Lemma~\ref{lem:first_stage_tail}.
\end{proof}

Lemma~\ref{lem:gaussian_tail} gives the precise sense in which Gaussian-type BEC states can mask the EEI structural defect. The defect operator does not vanish; rather, the first unresolved intermediate function \(q_0v_h\) remains boundary-negligible and spectrally resolved.

\begin{remark}[Other probe states]
\label{rem:other_probe_states}
The same criterion also clarifies other probes.
\begin{enumerate}
\item \(C^\infty\) data compactly supported away from the boundary are masked: multiplication by \(q_0\) preserves smooth periodic compatibility, giving super-algebraic, though not necessarily exponential, tail decay.
\item Vortex Gaussian data, e.g. \((x+iy)^me^{-\alpha|{\bf x}|^2}\), belong to the Gaussian-polynomial class and are masked when the box resolves their tails.
\item Low-frequency trigonometric polynomials are not necessarily masked: \(q_0v_h\) inherits the nonperiodic boundary mismatch of \(q_0\) and generally has algebraic decay.
\item Broadband trigonometric data with energy near the Fourier cutoff are typically visible, because multiplication by \(q_0\) spreads spectral energy outside \({\mathcal K}_h\).
\item A Gaussian wave packet \(e^{ik_0\cdot{\bf x}}p({\bf x})e^{-\alpha|{\bf x}-{\bf x}_0|^2}\) is masked only if it is both boundary-negligible and spectrally separated from the cutoff.
\end{enumerate}
\end{remark}

\subsection{Persistence of structural-defect visibility}
\label{subsec:persistence_visibility}

The first-stage classification is not generally undone by the remaining EEI stages. The point is that the subsequent stages are unitary or nonsmoothing transformations, not low-pass filters. Hence algebraic unresolved tails generated by the first quadratic phase are not generically converted into spectrally small tails, while localized Gaussian-type data remain masked as long as they stay resolved and away from the boundary.

\begin{remark}[Persistence of masked and visible states]
\label{rem:persistence_visibility}
Consider one fixed-grid EEI step \(E_h(\tau)=Q_{7,h}(\tau)\cdots Q_{1,h}(\tau)\). Let \(v_h\in X_h\) be such that the first-stage unresolved tail \((I-P_h)Q_1(\tau)v_h\) decays only algebraically in \(N_h\), or is \(O(1)\), as in Lemmas~\ref{lem:constant_tail}--\ref{lem:fourier_mode_tail}. Then this unresolved component is not generically eliminated by the subsequent EEI stages. Conversely, if \(v_h\) is a localized Gaussian-polynomial state satisfying the resolution and boundary-negligibility assumptions of Lemma~\ref{lem:gaussian_tail}, then the intermediate states generated during one EEI step remain masked, provided the shears do not move significant mass to the boundary and the resulting wave packet remains separated from the Fourier cutoff.\smallskip

\textup{\bf Justification.}
\textup{The later EEI stages do not contain a smoothing or low-pass filtering mechanism. The central differential stage \(Q_4(\tau)=e^{i\tau\nabla^T\mathsf K(\tau)\nabla}\) is a Fourier multiplier: on each Fourier mode it changes only the phase and preserves the magnitude of the corresponding coefficient. Thus it cannot reduce an algebraic Fourier tail to spectral size.}

\textup{The shear stages are coordinate transformations. For example, \(e^{-\tau(\eta_2x+\eta_3y)\partial_z}\) transports the argument in the \(z\)-direction by an amount depending on \(x\) and \(y\). Such stages may shift or mix frequencies, and in a fixed Fourier representation they may create additional non-grid-frequency content, but they are not smoothing operators. The remaining phase stages are multiplications by unit-modulus oscillatory factors and also do not regularize algebraic tails.}

\textup{Therefore an algebraic or \(O(1)\) unresolved component produced by the first quadratic phase is not generically removed by the later stages. On the other hand, if the input is a Gaussian-polynomial state whose physical tail is negligible at the boundary and whose spectrum is separated from the cutoff, then multiplication by quadratic phases, Fourier multipliers, and moderate linear shears keep the intermediate functions in a resolved localized analytic/Gaussian-type class. Under the stated resolution and box-size assumptions, the unresolved tail therefore remains at the spectral/boundary floor described in Lemma~\ref{lem:gaussian_tail}.}
\end{remark}

We next record an important consequence: the unresolved-tail source of the discrete EEI defect is not suppressed at the spectral spatial-error scale by refining the grid.

\begin{theorem}[Unresolved-tail visibility not removed by grid refinement]
\label{thm:visibility_grid_refinement}
Let \(P_h\) be the Fourier projection onto \(X_h\), and let \(q_0({\bf x})={\bf x}^T\mathsf P_0{\bf x}\). Suppose that \(v_h\in X_h\) satisfies
\[
  \|(I-P_h)(q_0v_h)\|_{L^2({\mathcal D})}
  \ge cN_h^{-s}\|v_h\|_{L^2({\mathcal D})}
\]
for some \(c>0\) and \(s>0\). If the spectral resolution floor satisfies
\(\varepsilon_{\rm sp}(h)\le Ce^{-\alpha N_h}\), then
\[
  \frac{\|(I-P_h)(q_0v_h)\|_{L^2({\mathcal D})}}
       {\varepsilon_{\rm sp}(h)\|v_h\|_{L^2({\mathcal D})}}
  \to\infty
  \qquad\text{as }h\to0 .
\]
Moreover, if \(\widehat q_0(\ell)\ne0\) for some fixed nonzero index \(\ell\), and if \(k_h\in{\mathcal K}_h\) satisfies \(k_h+\ell\notin{\mathcal K}_h\), then for \(v_h({\bf x})=e^{ik_h\cdot{\bf x}}\),
\[
  \|(I-P_h)(q_0v_h)\|_{L^2({\mathcal D})}\ge c_\ell>0,
\]
where \(c_\ell\) depends only on \(|\widehat q_0(\ell)|\) and the Fourier normalization.
\end{theorem}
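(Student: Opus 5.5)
The theorem has two parts, and I would prove them separately. Both reduce to elementary estimates; the only real work is bookkeeping of the Fourier normalization in the second part.

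\textbf{First part.} This is a direct comparison of an algebraic lower bound with an exponential upper bound. Dividing the hypothesis by \(\varepsilon_{\rm sp}(h)\|v_h\|_{L^2({\mathcal D})}\), and using \(\varepsilon_{\rm sp}(h)\le Ce^{-\alpha N_h}\), gives
\[
  \frac{\|(I-P_h)(q_0v_h)\|_{L^2({\mathcal D})}}{\varepsilon_{\rm sp}(h)\|v_h\|_{L^2({\mathcal D})}}
  \ge \frac{c}{C}\,N_h^{-s}e^{\alpha N_h}.
\]
Since \(N_h\asymp h^{-1}\to\infty\) as \(h\to0\), the right-hand side tends to infinity, because \(e^{\alpha N_h}\) dominates any power of \(N_h\). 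Two implicit points should be stated:
\begin{itemize}
\item we need \(v_h\ne0\), which is harmless since the hypothesis is homogeneous;
\item we take \(\alpha>0\) and \(\varepsilon_{\rm sp}(h)>0\), which is implicit in the statement.
\end{itemize}
By Lemma~\ref{lem:first_stage_tail}, the same conclusion transfers to \((I-P_h)Q_1(\tau)v_h\) at leading order in \(\tau\), which is the interpretive point of the theorem.

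\textbf{Second part.} Here I would use the shift identity from the proof of Lemma~\ref{lem:fourier_mode_tail}. With \(v_h=e^{ik_h\cdot{\bf x}}\) we have
\[
  \widehat{q_0v_h}(m)=\widehat q_0(m-k_h).
\]
By Parseval's identity on \({\mathcal D}\) (with the fixed normalization \(\|e^{ik\cdot{\bf x}}\|_{L^2}^2=|{\mathcal D}|\), or \(1\) for the orthonormal basis),
\[
  \|(I-P_h)(q_0v_h)\|_{L^2({\mathcal D})}^2=|{\mathcal D}|\sum_{m\notin{\mathcal K}_h}|\widehat q_0(m-k_h)|^2.
\]
Retaining only the single term \(m=k_h+\ell\), which lies outside \({\mathcal K}_h\) by assumption, gives the lower bound
\[
  |{\mathcal D}|^{1/2}|\widehat q_0(\ell)|=:c_\ell>0.
\]
This constant is independent of \(h\), depending only on \(|\widehat q_0(\ell)|\) and the normalization. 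It is also relevant that such \(k_h\) always exist for fine grids: take \(k_h\) on the boundary of \({\mathcal K}_h\) in the direction of \(\ell\). The hypothesis \(\widehat q_0(\ell)\neq0\) is genuinely satisfiable; for example, the pure quadratic expansion in Lemma~\ref{lem:constant_tail} shows \(\widehat q_0(n e_1)\ne0\) for all \(n\ne0\) when \(q_0\) contains an \(x^2\) term.

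\textbf{Main obstacle.} The step needing care is making sure Parseval is applied to the periodic extension of \(q_0v_h\) on the box. The function \(q_0\) is a polynomial and is not periodic, but it is in \(L^2({\mathcal D})\), and its Fourier series on the box is exactly what \(P_h\) sees. Hence the projection and the coefficients \(\widehat q_0\) are consistent, and no pointwise convergence is needed. I would state this explicitly, together with the normalization convention, since the paper writes "up to the fixed normalization."
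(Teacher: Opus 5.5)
Your proposal is correct and follows the paper's own proof: the first part is the same direct comparison of \(cN_h^{-s}\) against \(Ce^{-\alpha N_h}\), and the second part is the same shift identity \(\widehat{q_0v_h}(m)=\widehat q_0(m-k_h)\), keeping only the unresolved coefficient at \(m=k_h+\ell\). Your explicit Parseval normalization, and your remarks on \(v_h\ne0\) and \(\alpha>0\), only make precise what the paper leaves implicit.
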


\begin{proof}
The first statement follows immediately from
\[
  \frac{\|(I-P_h)(q_0v_h)\|_{L^2}}{\varepsilon_{\rm sp}(h)\|v_h\|_{L^2}}
  \ge \frac{cN_h^{-s}}{Ce^{-\alpha N_h}}\to\infty .
\]
For the cutoff-mode claim, multiplication by \(e^{ik_h\cdot{\bf x}}\) shifts the Fourier coefficients of \(q_0\). Since \(k_h+\ell\notin{\mathcal K}_h\), the unresolved coefficient at \(k_h+\ell\) equals \(\widehat q_0(\ell)\), which gives the lower bound.
\end{proof}

Thus this unresolved-tail source is not a conventional spatial discretization error that disappears at the spectral approximation rate; it is tied to the fixed-grid temporal composition mechanism. Grid refinement reduces fixed low-frequency visible tails only algebraically, whereas localized analytic data are governed by a spectrally small floor. For cutoff-dependent probes, the tail can even remain \(O(1)\). Hence refining the grid does not turn structurally visible EEI probes into masked Gaussian-type probes; the designed high-order accuracy may still fail to be realized even on refined grids.

\subsection{The quadratic EEI defect operator}
\label{subsec:eei_D2_operator}

We next connect the stagewise unresolved Fourier-tail mechanism with the local logarithm of the fixed-grid EEI map. Since the EEI coefficients depend on \(\tau\), each fixed-grid stage may have an expansion
\begin{equation}
  \log Q_{j,h}(\tau)
  =\tau B^{(0)}_{j,h}+\tau^2B^{(1)}_{j,h}+O(\tau^3),
  \qquad j=1,\ldots,7.
  \label{eq:stage_log_expansion}
\end{equation}
In particular, for the collocation realization of the first stage, \(B^{(0)}_{1,h}v_h=-i{\mathcal I}_h(q_0v_h)\), \(v_h\in X_h\). Relative to the orthogonal projected continuous generator \(-iP_h(q_0v_h)\), the difference
\(-i({\mathcal I}_h-P_h)(q_0v_h)\) is precisely the aliasing contribution generated by the unresolved Fourier coefficients of \(q_0v_h\). Hence the tail \((I-P_h)(q_0v_h)\) identified above is a source term for the fixed-grid local-logarithmic defect, even though the implemented stage itself is unitary. Applying BCH to \(E_h(\tau)\) gives the following coefficient.

\begin{lemma}[Quadratic local-logarithmic coefficient of the EEI map]
\label{lem:EEI_D2_formula}
Assume that the stage expansions \eqref{eq:stage_log_expansion} hold on the fixed finite-dimensional space \(X_h\). Then
\begin{equation}
  \log E_h(\tau)=\tau A^E_{1,h}+\tau^2D^E_{2,h}+O(\tau^3),
  \label{eq:EEI_log_D2}
\end{equation}
where \(A^E_{1,h}:=\sum_{j=1}^7B^{(0)}_{j,h}\). If discrete EEI is first-order consistent, then \(A^E_{1,h}=A_h\), and
\begin{equation}
  D^E_{2,h}
  = \textstyle \sum_{j=1}^7B^{(1)}_{j,h}
   +\frac12\sum_{r>s}[B^{(0)}_{r,h},B^{(0)}_{s,h}].
  \label{eq:EEI_D2_formula}
\end{equation}
The order \(r>s\) corresponds to the product
\(E_h(\tau)=Q_{7,h}(\tau)\cdots Q_{1,h}(\tau)\).
\end{lemma}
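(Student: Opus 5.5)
The plan is to apply the finite-dimensional BCH formula iteratively to the seven-factor product \(E_h(\tau)=Q_{7,h}(\tau)\cdots Q_{1,h}(\tau)\), keeping only terms through order \(\tau^2\). Since \(X_h\) is finite-dimensional and each \(Q_{j,h}(\tau)\) is analytic with \(Q_{j,h}(0)=I\), every \(Z_j(\tau):=\log Q_{j,h}(\tau)\) is analytic near \(\tau=0\) and is \(O(\tau)\) in any matrix norm. Hence the product lies in a neighborhood of the identity where the principal logarithm is analytic, so \(\log E_h(\tau)\) has a convergent power series whose coefficients we only need to identify.

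First I will establish the two-factor identity: for analytic \(X(\tau),Y(\tau)=O(\tau)\),
\[
  \log(e^{X}e^{Y})=X+Y+\tfrac12[X,Y]+O(\tau^3),
\]
since all higher BCH brackets involve at least three factors of size \(O(\tau)\). Then I will induct on the number of factors. Set \(W_m(\tau)=\log(Q_{m,h}\cdots Q_{1,h})\). The claim is
\[
  W_m=\sum_{j\le m}Z_j+\tfrac12\sum_{m\ge r>s\ge1}[Z_r,Z_s]+O(\tau^3).
\]
For the inductive step I combine \(e^{Z_{m+1}}e^{W_m}\). The new bracket is \(\tfrac12[Z_{m+1},W_m]\), and to the required order only the linear part \(\sum_{s\le m}Z_s\) of \(W_m\) contributes to it. This produces exactly the brackets \([Z_{m+1},Z_s]\) for \(s\le m\). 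The left-to-right ordering \(r>s\) follows from the right-to-left action of the product: the later stage always appears on the left.

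Finally I will substitute \(Z_j=\tau B^{(0)}_{j,h}+\tau^2B^{(1)}_{j,h}+O(\tau^3)\) from \eqref{eq:stage_log_expansion}. The linear terms give \(\tau\sum_jB^{(0)}_{j,h}+\tau^2\sum_jB^{(1)}_{j,h}\). In the commutators only the \(\tau^2[B^{(0)}_{r,h},B^{(0)}_{s,h}]\) part survives at second order. This yields \eqref{eq:EEI_log_D2} with \(A^E_{1,h}=\sum_jB^{(0)}_{j,h}\) and the stated formula \eqref{eq:EEI_D2_formula}.

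For the consistency clause, \(E_h(\tau)=I+\tau A^E_{1,h}+O(\tau^2)\), so first-order consistency \(E_h(\tau)=I+\tau A_h+O(\tau^2)\) forces \(A^E_{1,h}=A_h\).

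The only delicate point is making the \(O(\tau^3)\) remainders uniform across the induction, including keeping the intermediate products within the domain of the principal logarithm. In finite dimensions this follows from the analyticity of BCH near the origin, applied a fixed number of times (six), so I expect no real obstacle; the bookkeeping of the commutator ordering is the main place an error could occur.
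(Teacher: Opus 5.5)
Your proposal is correct and follows essentially the same route as the paper: set \(Z_j=\log Q_{j,h}(\tau)\), apply the second-order BCH expansion to the ordered seven-factor product, and substitute the stage expansions. The paper quotes the multi-factor BCH formula directly, whereas you derive it by induction and also spell out the consistency clause \(A^E_{1,h}=A_h\) and the \(r>s\) ordering, which the paper leaves implicit.
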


\begin{proof}
Let \(X_j(\tau)=\log Q_{j,h}(\tau)\). For the ordered product
\(e^{X_7}\cdots e^{X_1}\), BCH gives
\[
  \log(e^{X_7}\cdots e^{X_1})
  =\textstyle \sum_j X_j+\frac12\sum_{r>s}[X_r,X_s]+O(\tau^3).
\]
Substitution of \eqref{eq:stage_log_expansion} yields
\eqref{eq:EEI_log_D2}--\eqref{eq:EEI_D2_formula}.
\end{proof}

At the continuous level, the \(\tau^2\)-coefficient vanishes since the seven-stage EEI product is exactly \(E(\tau)=e^{\tau A}\). On the fixed grid, however, the represented generators \(B^{(0)}_{j,h}\) act only on \(X_h\), and the same cancellation need not survive: several stages create intermediate functions with unresolved Fourier components. The following identity shows how the first-stage tail enters the representation-level commutators.

\begin{lemma}[Representation identity for first-stage commutator mismatch]
\label{lem:first_stage_commutator_mismatch}
Let \(R_h\) be an idempotent fixed-grid representation operator satisfying \(R_h|_{X_h}=I\), such as the orthogonal Fourier projection \(P_h\) or the nodal trigonometric interpolation operator \({\mathcal I}_h\). Let \(B_1^{(0)}\) and \(B_r^{(0)}\) be continuous first-order EEI stage generators, and define
\[
  B_{j,h}^{(0)}:=R_hB_j^{(0)}R_h|_{X_h}.
\]
Then, for \(v_h\in X_h\),
\[
R_h[B_r^{(0)},B_1^{(0)}]v_h-[B_{r,h}^{(0)},B_{1,h}^{(0)}]v_h
=
R_hB_r^{(0)}(I-R_h)B_1^{(0)}v_h
-
R_hB_1^{(0)}(I-R_h)B_r^{(0)}v_h .
\]
For \(R_h=P_h\), the first residual contains \(-i(I-P_h)(q_0v_h)\). For \(R_h={\mathcal I}_h\), the corresponding term is the interpolation/aliasing residual generated by the same unresolved Fourier coefficients measured by \((I-P_h)(q_0v_h)\).
\end{lemma}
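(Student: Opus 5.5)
The identity is purely algebraic, so I will verify it by expanding both sides and inserting the decomposition \(I=R_h+(I-R_h)\) between the two continuous generators. Fix \(v_h\in X_h\). Since \(R_h|_{X_h}=I\), we have \(R_hv_h=v_h\), and therefore
\[
  B_{1,h}^{(0)}v_h=R_hB_1^{(0)}v_h,\qquad B_{r,h}^{(0)}v_h=R_hB_r^{(0)}v_h .
\]
Both of these lie in \(\operatorname{Ran}R_h=X_h\); for \(P_h\) and \({\mathcal I}_h\) the range is exactly \(X_h\), and idempotence together with \(R_h|_{X_h}=I\) identifies the range with \(X_h\) in general. Consequently \(B_{r,h}^{(0)}\) can be applied to \(B_{1,h}^{(0)}v_h\), giving
\[
  B_{r,h}^{(0)}B_{1,h}^{(0)}v_h=R_hB_r^{(0)}R_hB_1^{(0)}v_h ,
\]
and symmetrically with \(r\) and \(1\) interchanged.

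Next I expand \(R_h[B_r^{(0)},B_1^{(0)}]v_h=R_hB_r^{(0)}B_1^{(0)}v_h-R_hB_1^{(0)}B_r^{(0)}v_h\). In each of the two products I insert \(I=R_h+(I-R_h)\) between the factors. The \(R_h\)-pieces are exactly \(R_hB_r^{(0)}R_hB_1^{(0)}v_h-R_hB_1^{(0)}R_hB_r^{(0)}v_h\), which by the first step equals \([B_{r,h}^{(0)},B_{1,h}^{(0)}]v_h\). The remaining pieces are the two stated residuals, \(R_hB_r^{(0)}(I-R_h)B_1^{(0)}v_h-R_hB_1^{(0)}(I-R_h)B_r^{(0)}v_h\). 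Subtracting the discrete commutator from both sides then yields the identity.

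For the interpretive claims I use \(B_1^{(0)}v_h=-iq_0v_h\), which is the \(\tau\)-coefficient of \(e^{-i\tau q_\tau}\) from Lemma~\ref{lem:first_stage_tail}. With \(R_h=P_h\), the first residual becomes \(P_hB_r^{(0)}\bigl(-i(I-P_h)(q_0v_h)\bigr)\), so it contains precisely the first-stage tail \(-i(I-P_h)(q_0v_h)\). With \(R_h={\mathcal I}_h\), the residual is \(-i(I-{\mathcal I}_h)(q_0v_h)\). I would rewrite this as \((I-P_h)(q_0v_h)-({\mathcal I}_h-P_h)(q_0v_h)\), in the same way as the aliasing discussion following Lemma~\ref{lem:first_stage_tail}. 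The term \(({\mathcal I}_h-P_h)(q_0v_h)\) depends only on the Fourier coefficients of \(q_0v_h\) outside \({\mathcal K}_h\), because aliasing folds exactly those coefficients back into the resolved band.

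The main obstacle is domain bookkeeping rather than algebra. The continuous generators are unbounded differential or multiplication operators, so applying \(B_r^{(0)}\) to \(q_0v_h\) or to \((I-R_h)B_1^{(0)}v_h\) must make sense, and \(R_h\) must be applied to functions that are not in \(X_h\). I would handle this by working with smooth functions on \({\mathcal D}\): \(q_0v_h\) is a polynomial times a trigonometric polynomial, so every stage generator maps it to a smooth function. The statement should then be read pointwise on nodes for \({\mathcal I}_h\) and in \(L^2\) for \(P_h\), noting that \((I-P_h)(q_0v_h)\) lies in \(L^2\) with the algebraic tail established in Lemma~\ref{lem:constant_tail}.
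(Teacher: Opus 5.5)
Your argument is correct and matches the paper's proof: you use $R_hv_h=v_h$ to write $[B_{r,h}^{(0)},B_{1,h}^{(0)}]v_h=R_hB_r^{(0)}R_hB_1^{(0)}v_h-R_hB_1^{(0)}R_hB_r^{(0)}v_h$, insert $I=R_h+(I-R_h)$ into the continuous commutator, and then use $B_1^{(0)}v_h=-iq_0v_h$ for the projection and aliasing interpretation. One small correction: idempotence together with $R_h|_{X_h}=I$ does not by itself give $\operatorname{Ran}R_h=X_h$ (the identity operator satisfies both), so this property should be taken as part of what a fixed-grid representation operator means, as it is for $P_h$ and ${\mathcal I}_h$.
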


\begin{proof}
Using \(R_hv_h=v_h\), we have
\[
  [B_{r,h}^{(0)},B_{1,h}^{(0)}]v_h
  =
  R_hB_r^{(0)}R_hB_1^{(0)}v_h
  -
  R_hB_1^{(0)}R_hB_r^{(0)}v_h .
\]
Subtracting this from \( R_hB_r^{(0)}B_1^{(0)}v_h-R_hB_1^{(0)}B_r^{(0)}v_h\) and inserting \(I=R_h+(I-R_h)\) at the intermediate stage gives the identity. Since \(B_1^{(0)}v_h=-iq_0v_h\), the first-stage residual is generated by the unresolved content of \(q_0v_h\). In the projection case, this is \((I-P_h)(q_0v_h)\), while in the collocation case the same coefficients enter through aliasing in \({\mathcal I}_h(q_0v_h)-P_h(q_0v_h)\).
\end{proof}

Lemma~\ref{lem:first_stage_commutator_mismatch} is the algebraic link between the Fourier-tail estimates above and the defect operator \(D^E_{2,h}\). It does not assert that \(D^E_{2,h}v_h\) is determined only by the first stage, nor that the projection tail is a lower bound for the collocation defect: aliasing cancellations, later stages, coefficient-variation terms, and other commutators may also contribute. Rather, it identifies \((I-P_h)(q_0v_h)\) as a concrete unresolved-content source of the discrete second-order cancellation residual, explaining why Gaussian-polynomial data may mask the obstruction while other states may expose it. The analysis identifies a fixed-grid source of second-order cancellation failure, rather than a complete parameterwise classification of all possible cancellations in \(D^E_{2,h}\).

\subsection{Statewise defects and impact on high-order composition}
\label{subsec:statewise_defects}

The operator \(D^E_{2,h}\) gives the leading structural defect of the original fixed-grid EEI map. Combining \eqref{eq:EEI_log_D2} with Lemma~\ref{lem:even_log_defects} yields its statewise form.

Applying Lemma~\ref{lem:even_log_defects} to \(L_h=E_h\), we obtain, for each nonzero \(v_h\in X_h\),
\begin{align}\label{twodefects}
&E_h(-\tau)E_h(\tau)v_h-v_h=2\tau^2D^E_{2,h}v_h+O(\tau^3), \\
&E_h(\tau)^2v_h-E_h(2\tau)v_h=-2\tau^2D^E_{2,h}v_h+O(\tau^3).
\end{align}
Therefore, the same even local-logarithmic coefficient produces both the quadratic reversibility defect and the quadratic near-group defect.

To explain the state-dependent behavior of the discrete EEI propagator, define the EEI visibility coefficient
\[
  \nu_E(v_h):=\|D^E_{2,h}v_h\|_h/\|v_h\|_h,\qquad v_h\ne0.
\]
Let \(\varepsilon_{\rm floor}(h)\) denote the effective resolution floor of a temporal convergence test, including spatial truncation, reference-solution, and roundoff errors. If \(\nu_E(v_h)\lesssim\varepsilon_{\rm floor}(h)\), the quadratic EEI defect is present at the operator level but masked at that resolution. If \(\nu_E(v_h)\gg\varepsilon_{\rm floor}(h)\) and \(\tau^2\nu_E(v_h)\) dominates the higher-order terms over the tested step-size range, then the defects in \eqref{twodefects} are visible.

The consequence for high-order splitting is that real symmetric compositions can cancel odd modified-logarithmic error terms, but cannot cancel a defect of order \(\tau^2\).

\begin{theorem}[Persistence of an even defect under real compositions]
\label{thm:even_defect_composition}
Assume that the original fixed-grid EEI linear map satisfies
\[
    \log E_h(\tau)
    =
    \tau A_h+\tau^2D^E_{2,h}+O(\tau^3),
    \qquad
    D^E_{2,h}\not\equiv 0 .
\]
Let \({\mathcal N}_h(\tau)\) be the exact density-dependent nonlinear phase flow, and define the EEI-based Strang block \(S^E_{2,h}(\tau) = E_h(\tau/2)\circ {\mathcal N}_h(\tau)\circ E_h(\tau/2)\). Then, whenever the local logarithm exists in the fixed finite-dimensional
smooth-flow sense,
\[
    \log S^E_{2,h}(\tau)
    =
    \tau F_h+\tau^2G^E_{2,h}+O(\tau^3),
    \qquad
    F_h=A_h+B_h,
\]
where
\[
    G^E_{2,h}
    = \textstyle
    \frac12\mathcal D^E_{2,h},
    \qquad
    \mathcal D^E_{2,h}(\psi_h)=D^E_{2,h}\psi_h .
\]
Thus the quadratic even local-logarithmic defect of the original discrete EEI is inherited by the EEI-based nonlinear Strang block. Moreover, for any real-coefficient consistent composition
\[
    C^E_h(\tau) = S^E_{2,h}(a_s\tau)\circ\cdots\circ S^E_{2,h}(a_1\tau),\qquad \textstyle \sum_{j=1}^s a_j=1 ,
\]
one has
\[
    \log C^E_h(\tau)
    = \textstyle
    \tau F_h+
    \left(\sum_{j=1}^s a_j^2\right)
    \tau^2G^E_{2,h}
    +O(\tau^3).
\]
Since \(\sum_{j=1}^s a_j^2>0\), the inherited quadratic even defect cannot be canceled by any real consistent high-order composition.
\end{theorem}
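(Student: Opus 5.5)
We would work in the fixed finite-dimensional smooth-flow setting and treat every one-step map as a near-identity diffeomorphism of \(X_h\). Its local logarithm is the vector field \(Z(\tau)=\tau Z_1+\tau^2 Z_2+O(\tau^3)\) whose time-one flow reproduces the map. Composition of two such maps will be handled by the BCH formula for vector fields (Lie series). To second order this reads \(\log(\Phi\circ\Psi)=Z_\Phi+Z_\Psi+\tfrac12[Z_\Phi,Z_\Psi]+O(3)\), where \([\cdot,\cdot]\) is the Lie bracket of vector fields. For linear fields it reduces to the matrix commutator used in Lemma~\ref{lem:even_log_defects} and Lemma~\ref{lem:EEI_D2_formula}.

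\textbf{Step 1 (the Strang block).} From \(\log E_h(\tau/2)=\tfrac{\tau}{2}A_h+\tfrac{\tau^2}{4}D^E_{2,h}+O(\tau^3)\), each outer factor contributes the linear field \(\psi\mapsto(\tfrac{\tau}{2}A_h+\tfrac{\tau^2}{4}D^E_{2,h})\psi\). The nonlinear phase flow \({\mathcal N}_h(\tau)\) is the exact time-\(\tau\) flow of \(B_h\), so its logarithm is exactly \(\tau B_h\) with no \(\tau^2\) term; this also follows from Lemma~\ref{lem:nonlinear_phase_reversible} combined with the odd-logarithm argument. Apply the second-order BCH formula to the triple product \(X\circ Y\circ X\). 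The first-order part is \(\tau(A_h+B_h)=\tau F_h\). The \(\tau^2\) bracket terms are \(\tfrac12[\tfrac{\tau}{2}A_h,\tau B_h]+\tfrac12[\tau B_h,\tfrac{\tau}{2}A_h]+\tfrac12[\tfrac{\tau}{2}A_h,\tfrac{\tau}{2}A_h]\), and these cancel by antisymmetry. This is the usual palindromic cancellation, and it does not rely on self-adjointness of \(E_h\). What remains at order \(\tau^2\) is \(\tfrac{\tau^2}{4}D^E_{2,h}+\tfrac{\tau^2}{4}D^E_{2,h}=\tfrac{\tau^2}{2}D^E_{2,h}\), which gives \(G^E_{2,h}=\tfrac12\mathcal D^E_{2,h}\).

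\textbf{Step 2 (compositions).} Write \(Z_j=a_j\tau F_h+a_j^2\tau^2G^E_{2,h}+O(\tau^3)\) for the logarithm of \(S^E_{2,h}(a_j\tau)\). Apply the iterated BCH formula to the ordered product, exactly as in the proof of Lemma~\ref{lem:EEI_D2_formula}:
\[
\log C^E_h(\tau)=\textstyle\sum_j Z_j+\frac12\sum_{r>s}[Z_r,Z_s]+O(\tau^3).
\]
Every bracket at order \(\tau^2\) has the form \(a_ra_s\tau^2[F_h,F_h]=0\). Hence
\[
\log C^E_h(\tau)=\textstyle\tau\big(\sum_j a_j\big)F_h+\big(\sum_ja_j^2\big)\tau^2G^E_{2,h}+O(\tau^3),
\]
and consistency \(\sum_j a_j=1\) gives the stated expansion.

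\textbf{Step 3 (conclusion).} The \(a_j\) are real and sum to \(1\), so they are not all zero and \(\sum_j a_j^2>0\); in fact Cauchy--Schwarz gives \(\sum_j a_j^2\ge 1/s\). Since \(D^E_{2,h}\not\equiv0\), the \(\tau^2\) coefficient of \(\log C^E_h\) is nonzero. The composition therefore differs from any exact flow at order \(\tau^2\) and cannot exceed first-order local logarithmic accuracy.

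\textbf{Main obstacle.} The delicate point is justifying BCH for the nonlinear vector fields, including the sign and ordering conventions of the Lie bracket for composed diffeomorphisms. We would handle this by expanding the maps directly in Taylor series in \(\tau\) up to \(O(\tau^3)\), with \(\Phi(\tau)\psi=\psi+\tau Z_1(\psi)+\tau^2(Z_2+\tfrac12 Z_1'Z_1)(\psi)+O(\tau^3)\). The argument only needs that \(\tau^2\) terms add linearly and that the mixed terms are antisymmetric in the two fields. It never needs the precise sign convention, because all surviving brackets are of a field with itself, or cancel pairwise by palindromy.
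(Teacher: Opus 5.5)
Your proposal is correct and follows essentially the same route as the paper: a second-order BCH expansion of the Strang block in which the mixed brackets cancel palindromically, leaving $2(\tau/2)^2D^E_{2,h}=\frac{\tau^2}{2}D^E_{2,h}$, followed by a second-order BCH expansion of the composition in which every order-$\tau^2$ bracket is a multiple of $[F_h,F_h]=0$. Your Taylor-expansion check that the mixed second-order terms are antisymmetric, so that the bracket sign convention for nonlinear vector fields is irrelevant, makes explicit a point the paper passes over; the Cauchy--Schwarz bound $\sum_j a_j^2\ge 1/s$ is a harmless sharpening of the paper's positivity remark.
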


\begin{proof}
Set \(\theta=\tau/2\). By assumption,
\[
    Z_E(\theta):=\log E_h(\theta)=\theta A_h+\theta^2D^E_{2,h}+O(\theta^3).
\]
The nonlinear phase flow is the exact flow of the vector field \(B_h\), so its local logarithm is \(\tau B_h\). Hence, in the finite-dimensional Lie algebra of smooth vector fields,
\[
    S^E_{2,h}(\tau)=\exp(Z_E(\theta))\exp(\tau B_h)\exp(Z_E(\theta)).
\]
Applying the BCH formula and retaining terms through order \(\tau^2\), we obtain
\[
\begin{aligned}
    \log S^E_{2,h}(\tau)
    &= \textstyle
    Z_E(\theta)+\tau B_h+Z_E(\theta)
    +\frac12[Z_E(\theta),\tau B_h]
    +\frac12[\tau B_h,Z_E(\theta)]
    +O(\tau^3) \\
    &= \textstyle
    \tau(A_h+B_h)+2\theta^2D^E_{2,h}+O(\tau^3) =
    \tau F_h+\frac{\tau^2}{2}D^E_{2,h}+O(\tau^3).
\end{aligned}
\]
The two second-order commutator terms cancel, and commutators involving \(D^E_{2,h}\) are \(O(\tau^3)\). Interpreting \(D^E_{2,h}\) as the linear vector field \(\mathcal D^E_{2,h}(\psi_h)=D^E_{2,h}\psi_h\), we obtain \(G^E_{2,h}=\frac12\mathcal D^E_{2,h}\).

For the composition, each stage satisfies
\[
    \log S^E_{2,h}(a_j\tau)=a_j\tau F_h+a_j^2\tau^2G^E_{2,h}+O(\tau^3).
\]
A second-order BCH expansion of the ordered product gives
\[
    \log C^E_h(\tau)
    = \textstyle
    \sum_{j=1}^s
    \left(a_j\tau F_h+a_j^2\tau^2G^E_{2,h}\right)
    +
    \frac12\sum_{1\le \ell<j\le s}[a_j\tau F_h,a_\ell\tau F_h]
    +O(\tau^3).
\]
The second-order commutator term vanishes since it is a commutator of scalar multiples of \(F_h\), while commutators involving \(G^E_{2,h}\) are \(O(\tau^3)\). Using \(\sum_{j=1}^s a_j=1\), we obtain
\[
    \log C^E_h(\tau)
    = \textstyle
    \tau F_h+
    \left(\sum_{j=1}^s a_j^2\right)\tau^2G^E_{2,h}
    +O(\tau^3).
\]
Since the coefficients are real and consistent, \(\sum_{j=1}^s a_j^2>0\). This proves that the inherited quadratic even defect cannot be canceled by any real consistent composition.
\end{proof}

\begin{corollary}[State-dependent obstruction to design order]
\label{cor:state_dependent_composition_obstruction}
Under the assumptions of Theorem~\ref{thm:even_defect_composition}, the leading even defect of the composed EEI-based method on a state \(\psi_h\) is
\[
    \textstyle \frac12
    \left(\sum_{j=1}^s a_j^2\right)
    \tau^2D^E_{2,h}\psi_h
    +O(\tau^3).
\]
If \(\|D^E_{2,h}\psi_h\|_h\) is above the effective resolution floor and the displayed term dominates the remainder over the tested step-size range, then the composition cannot exhibit its formal fourth- or sixth-order design behavior on that state. Conversely, if \(D^E_{2,h}\psi_h\) remains at the spatial, reference-solution, or roundoff floor along the propagated states, the same operator-level obstruction may be masked in observed temporal convergence.
\end{corollary}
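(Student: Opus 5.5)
The corollary follows from the last display of Theorem~\ref{thm:even_defect_composition}, together with a statewise version of Lemma~\ref{lem:even_log_defects}. I will work on a fixed grid, where the local logarithm of \(C^E_h(\tau)\) exists as a smooth vector field. First I will compare \(C^E_h(\tau)\) with any method self-adjoint reference of the same formal odd structure, and ultimately with the exact semidiscrete flow \(\exp(\tau F_h)\). The difference of the two logarithms is
\[
  \log C^E_h(\tau)-\tau F_h=\Big(\sum_{j=1}^s a_j^2\Big)\tau^2G^E_{2,h}+O(\tau^3),
\]
by the theorem. Substituting \(G^E_{2,h}=\frac12\mathcal D^E_{2,h}\) and evaluating the vector field at \(\psi_h\) gives the displayed leading even defect \(\frac12(\sum_j a_j^2)\tau^2D^E_{2,h}\psi_h\).

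Next I will transfer this from the logarithm to the one-step maps. Let \(Y(\tau)=\log C^E_h(\tau)\). I will expand \(\exp(Y(\tau))\psi_h-\exp(\tau F_h)\psi_h\) in powers of \(\tau\), using the Lie-series (Gröbner) expansion of flows of smooth vector fields on the finite-dimensional space \(X_h\), exactly as in the exponentiation step of Lemma~\ref{lem:even_log_defects}. Both series have first term \(\psi_h\). Their \(\tau\)-terms agree, both being \(F_h(\psi_h)\). At order \(\tau^2\) they differ only by \(\frac12(\sum a_j^2)D^E_{2,h}\psi_h\); the iterated term \(\frac{\tau^2}{2}F_h'F_h\) is common to both. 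Since \(X_h\) is finite-dimensional and the fields are smooth, the remainder is \(O(\tau^3)\), uniformly for \(\psi_h\) in a bounded set. This yields the first claim.

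For the visibility dichotomy, I will use the triangle inequality on the local error. When \(\|D^E_{2,h}\psi_h\|_h\) exceeds \(\varepsilon_{\rm floor}(h)\) and the displayed term dominates the \(O(\tau^3)\) remainder on the tested range, the local error is bounded below by \(c\tau^2\). In that regime no error behaving like \(\tau^5\) or \(\tau^7\) per step can be observed. After \(T/\tau\) steps the global error is therefore at best first order, rather than of order four or six. Conversely, if \(D^E_{2,h}\) applied along the propagated states stays at the floor, the \(\tau^2\) contribution is swamped by the floor, and the odd-order terms dictate the observed rates.

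The main obstacle is the propagation step: it requires that the statewise defect along the trajectory, and not only at the initial state, is what controls the errors. I would handle this with a Lady Windermere's fan argument. The global error is a sum of local defects transported by the stable, unitary-dominated discrete flow, and each local defect is evaluated at the current propagated state. This is why the statement is phrased in terms of the propagated states. The claim of non-realization of design order is heuristic (``dominates over the tested range''), so I will state it under that explicit dominance hypothesis and not attempt sharp constants.
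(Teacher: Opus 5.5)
Your proposal is correct and is essentially the paper's argument. The corollary is read off from the last display of Theorem~\ref{thm:even_defect_composition}, with $G^E_{2,h}=\frac12\mathcal D^E_{2,h}$ evaluated at $\psi_h$, and your Lie-series transfer to the one-step map is the exponentiation step of Lemma~\ref{lem:even_log_defects}. One caution: a Lady Windermere's fan argument only gives upper bounds, so it supports the masking direction but not your claim that the global error is ``at best first order''; that lower bound also needs the transported $\tau^2$ local defects not to cancel along the trajectory, which the paper assumes (``in the absence of special cancellations'') after the corollary rather than proves.
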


Thus the state-dependent EEI defect has a direct composition-level consequence. The term displayed in Corollary~\ref{cor:state_dependent_composition_obstruction} is a one-step \(O(\tau^2)\) defect in the modified logarithm. Under the usual finite-time stability and in the absence of special cancellations along the propagated trajectory, such a visible local defect accumulates to an \(O(\tau)\) global temporal error. Hence an EEI-based formally fourth- or sixth-order composition may exhibit first-order global behavior on states that activate the quadratic defect, while the same obstruction may be masked on localized Gaussian-type states. In Section~\ref{subsec:full_dynamics_3d}, we give numerical results showing such realized first order temporal accuracy. 

\begin{remark}[Relation to a conventional space--time error decomposition]
A standard PDE-level analysis would split the error into the Fourier spatial approximation error and the temporal error for the fixed semidiscrete ODE system. For analytic or spectrally resolved solutions, the former is typically \(O(e^{-cN})\), up to boundary and periodization effects. Under the usual finite-time stability assumptions, combining such a spatial estimate with an admissible method self-adjoint linear propagator would lead schematically to a space--time error of the form \(O(e^{-cN})+O(\tau^p)\), where \(p\) is the designed temporal order of the symmetric composition. Our study concerns the temporal term. It shows that this component is not determined by the formal composition order alone: an EEI-based discretization may inherit a visible quadratic even term, whereas admissible self-adjoint propagators remove this obstruction and restore the fixed-grid parity structure required for the designed high-order temporal behavior.
\end{remark}

\section{Admissible propagators for arbitrary-angle rotation}
\label{sec:admissible_propagators}

The previous section shows that continuous exactness of an EEI factorization does not guarantee the fixed-grid local-logarithmic parity required by symmetric high-order composition. We now impose this structure directly on the finite-dimensional Fourier map. A locally invertible linear propagator \(L_h(\tau):X_h\to X_h\) is called admissible if it is first-order consistent with the semidiscrete generator \(A_h\) and method self-adjoint. The admissible propagators constructed below also preserve the discrete norm. 

The role of self-adjointness in high-order symmetric composition is classical
\cite{Strang1968,Yoshida1990,HairerLubichWanner2006,mclachlan2002splitting,blanes2008splitting,BlanesCasas2016}.
Here the point is that the property must hold for the fully discrete Fourier map itself, not only for the continuous linear flow,  in the present problem setting.

This section constructs two such self-adjoint linear propagators for arbitrary-angle rotation: a symmetrized EEI propagator and a palindromic generalized shear propagator. Both remove the quadratic local-logarithmic defect, and their cubic modified generators arise from different mechanisms; this distinction helps interpret the state-dependent error constants observed below.

\subsection{Symmetrized EEI propagator}
\label{subsec:S_EEI}

Let \(E_h(\tau)\) denote the fixed-grid discrete EEI map from Section~\ref{sec:eei_defect}, associated with the continuous EEI factorization of \cite{bernier2021exact,bernier2021kinetic} and its arbitrary-angle rotational dipolar BEC implementation in \cite{LiuZhang2025}. Recall that \(E_h(\tau)=Q_{7,h}(\tau)\cdots Q_{1,h}(\tau)\), where the product acts from right to left and the stages \(Q_{j,h}\) are the fixed-grid Fourier pseudospectral/collocation realizations of the EEI factors. The symmetrized EEI is
\begin{align}
  E_h^{\rm S}(\tau)&:=E_h^\dagger(\tau/2)E_h(\tau/2)=E_h(-\tau/2)^{-1}E_h(\tau/2),\; \mbox{ or equivalently,}
  \label{eq:S_EEI_def}\\ \nonumber
  E_h^{\rm S}(\tau)&=Q_{1,h}(-\tau/2)^{-1}\cdots Q_{7,h}(-\tau/2)^{-1}Q_{7,h}(\tau/2)\cdots Q_{1,h}(\tau/2).
\end{align}

This is a stage-wise explicit formula rather than a dense matrix inverse. Each EEI stage is realized by FFTs, inverse FFTs, and diagonal unit-modulus phase multipliers in physical, Fourier, or mixed phase space. Thus \(Q_{j,h}(-\tau/2)^{-1}\) is evaluated by reversing the operations of that stage and conjugating the corresponding phase factors. No linear system, dense inverse, or iterative solve is needed. The adjoint half-step has the same cost as one EEI half-step, and \(E_h^{\rm S}(\tau)\) costs twice the original EEI.

\begin{theorem}[Structure of the symmetrized EEI propagator]
\label{thm:S_EEI_structure}
Assume that \(E_h(\tau)\) is locally invertible and first-order consistent with the fixed-grid generator \(A_h\), namely \(E_h(\tau)=I+\tau A_h+O(\tau^2)\). Then \(E_h^{\rm S}(\tau)\) is first-order consistent with \(A_h\) and method self-adjoint, \(E_h^{\rm S}(-\tau)E_h^{\rm S}(\tau)=I\). Since \(E_h(\tau)\) is unitary in the chosen discrete norm, \(E_h^{\rm S}(\tau)\) is also unitary. Moreover, whenever the local logarithm is defined,
\[
  \log E_h^{\rm S}(\tau)=\tau A_h+\tau^3E^{\rm S}_{3,h}+O(\tau^5).
\]
Equivalently, if we write \(\log E_h^{\rm S}(\tau)=\tau A_h+\tau^2D^{\rm S}_{2,h}+O(\tau^3)\), then \(D^{\rm S}_{2,h}=0\) as an operator on \(X_h\). This vanishing is independent of the state to which \(E_h^{\rm S}(\tau)\) is applied.
\end{theorem}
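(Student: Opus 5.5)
The plan is to verify each claimed property directly from the definition \eqref{eq:S_EEI_def}, $E_h^{\rm S}(\tau)=E_h(-\tau/2)^{-1}E_h(\tau/2)$, and then read off the logarithmic structure from Lemma~\ref{lem:odd_log_self_adjoint}.

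\emph{Self-adjointness and unitarity.} Replacing $\tau$ by $-\tau$ gives $E_h^{\rm S}(-\tau)=E_h(\tau/2)^{-1}E_h(-\tau/2)$. Multiplying,
\[
E_h^{\rm S}(-\tau)E_h^{\rm S}(\tau)=E_h(\tau/2)^{-1}E_h(-\tau/2)E_h(-\tau/2)^{-1}E_h(\tau/2)=I,
\]
for all small real $\tau$ at which $E_h(\pm\tau/2)$ is invertible, which holds by local invertibility. Equivalently, $E_h^{\rm S}=\Phi^\dagger\circ\Phi$ with $\Phi(\tau)=E_h(\tau/2)$. Since $(\Phi^\dagger\circ\Phi)^\dagger=\Phi^\dagger\circ\Phi^{\dagger\dagger}=\Phi^\dagger\circ\Phi$, the map is method self-adjoint. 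For unitarity, each stage $Q_{j,h}$ is unitary in $\|\cdot\|_h$ (unit-modulus nodal multipliers, Fourier multipliers, and FFT-realized shears, as noted in Section~\ref{subsec:eei_formula}). Hence $E_h(\pm\tau/2)$ are unitary, inverses of unitary matrices are unitary, and so is their product.

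\emph{Consistency.} From $E_h(\theta)=I+\theta A_h+O(\theta^2)$ we get $E_h(-\tau/2)^{-1}=I+\frac{\tau}{2}A_h+O(\tau^2)$ by the Neumann series. Multiplying by $E_h(\tau/2)=I+\frac{\tau}{2}A_h+O(\tau^2)$ gives $E_h^{\rm S}(\tau)=I+\tau A_h+O(\tau^2)$. At the logarithm level this is cleaner. Writing $\log E_h(\theta)=\theta A_h+\theta^2D^E_{2,h}+O(\theta^3)$ as in Lemma~\ref{lem:EEI_D2_formula}, we have $\log E_h(-\tau/2)^{-1}=\frac{\tau}{2}A_h-\frac{\tau^2}{4}D^E_{2,h}+O(\tau^3)$. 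BCH then gives $\log E_h^{\rm S}(\tau)=\tau A_h+0\cdot\tau^2+O(\tau^3)$, because the $D^E_{2,h}$ contributions cancel and $[\frac{\tau}{2}A_h,\frac{\tau}{2}A_h]=0$. This is a direct check that $D^{\rm S}_{2,h}=0$.

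\emph{Odd logarithm.} $E_h^{\rm S}$ is analytic near $\tau=0$ with $E_h^{\rm S}(0)=I$. The analyticity is inherited from the stage matrices, since the EEI coefficients are assumed smooth, and I take this to mean analytic in the nonsingular regime. Lemma~\ref{lem:odd_log_self_adjoint} then applies and yields $Z_{2j,h}=0$ for all $j\ge1$. Combined with $Z_{1,h}=A_h$, this gives $\log E_h^{\rm S}(\tau)=\tau A_h+\tau^3E^{\rm S}_{3,h}+O(\tau^5)$. All $Z_{m,h}$ are fixed operators on the finite-dimensional space $X_h$, determined by the map alone, so $D^{\rm S}_{2,h}=0$ holds as an operator identity, independent of the state.

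\emph{Main obstacle.} The only delicate point is the regularity hypothesis needed for Lemma~\ref{lem:odd_log_self_adjoint}: analyticity, or at least enough smoothness in $\tau$ for the expansion to exist to the stated order, together with $E_h^{\rm S}(\tau)$ lying in the neighborhood of $I$ where the principal logarithm is defined. I would handle this by noting that $\|E_h^{\rm S}(\tau)-I\|=O(\tau)$ on the fixed grid. If only $C^\infty$ smoothness is available, I would rerun the argument of Lemma~\ref{lem:odd_log_self_adjoint} on the Taylor coefficients, using the identity $\log E_h^{\rm S}(-\tau)=-\log E_h^{\rm S}(\tau)$, which gives oddness to any finite order.
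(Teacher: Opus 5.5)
Your proof is correct and follows the paper's route. Method self-adjointness comes from $(\Phi^\dagger\circ\Phi)^\dagger=\Phi^\dagger\circ\Phi$, which you also confirm by direct multiplication; consistency comes from multiplying the first-order expansions of $E_h(-\tau/2)^{-1}$ and $E_h(\tau/2)$; unitarity follows from closure under inversion and composition; and the odd logarithm follows from Lemma~\ref{lem:odd_log_self_adjoint}. Your extra BCH check that the $D^E_{2,h}$ contributions cancel (the second-order part of Proposition~\ref{prop:S_EEI_cubic}) and your remark on analytic versus $C^\infty$ regularity are sound additions that the paper's proof leaves implicit.
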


\begin{proof}
The method adjoint satisfies \((\Phi\Psi)^\dagger=\Psi^\dagger\Phi^\dagger\) and \((E_h^\dagger)^\dagger=E_h\). Consequently, \(E_h^{\rm S}=E_h^\dagger(\tau/2)E_h(\tau/2)\) is method self-adjoint. Since \(E_h(\tau)=I+\tau A_h+O(\tau^2)\), its method adjoint also satisfies \(E_h^\dagger(\tau)=I+\tau A_h+O(\tau^2)\); multiplying the two half-step expansions gives first-order consistency, and unitarity follows from preservation of unitarity under inversion and composition. The odd local-logarithm expansion then follows from Lemma~\ref{lem:odd_log_self_adjoint}.
\end{proof}

\subsection{Cubic modified generator of symmetrized EEI}
\label{subsec:S_EEI_cubic}

Although \(E_h^{\rm S}\) has no quadratic local-logarithmic term, its cubic term still reflects the non-self-adjoint fixed-grid EEI map.

\begin{proposition}[Cubic generator of symmetrized EEI]
\label{prop:S_EEI_cubic}
Suppose that
\begin{equation}
  \log E_h(\tau)=\tau A_h+\tau^2D^E_{2,h}+\tau^3D^E_{3,h}+O(\tau^4).
  \label{eq:EEI_log_D2_D3}
\end{equation}
Then
\begin{equation}
  \log E_h^{\rm S}(\tau)
  =\tau A_h+\frac{\tau^3}{8}\left(2D^E_{3,h}+[A_h,D^E_{2,h}]\right)+O(\tau^5).
  \label{eq:S_EEI_cubic}
\end{equation}
\end{proposition}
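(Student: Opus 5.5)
The plan is to compute the local logarithm of \(E_h^{\rm S}\) directly by BCH, writing it as the product of two exponentials whose logarithms come from \eqref{eq:EEI_log_D2_D3}. Set \(\theta=\tau/2\) and \(Z(\theta):=\log E_h(\theta)=\theta A_h+\theta^2D^E_{2,h}+\theta^3D^E_{3,h}+O(\theta^4)\). By the definition \eqref{eq:S_EEI_def}, \(E_h^{\rm S}(\tau)=E_h(-\theta)^{-1}E_h(\theta)=e^{X}e^{Y}\) with
\[
  X:=-Z(-\theta)=\theta A_h-\theta^2D^E_{2,h}+\theta^3D^E_{3,h}+O(\theta^4),\qquad
  Y:=Z(\theta)=\theta A_h+\theta^2D^E_{2,h}+\theta^3D^E_{3,h}+O(\theta^4).
\]
For \(X\) we use the inversion identity for the principal logarithm near the identity, \(\log(L^{-1})=-\log L\), exactly as in the proof of Lemma~\ref{lem:odd_log_self_adjoint}.

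Next we apply the finite-dimensional BCH formula through third order:
\[
\log(e^Xe^Y)=X+Y+\tfrac12[X,Y]+\tfrac1{12}\big([X,[X,Y]]+[Y,[Y,X]]\big)+\cdots .
\]
The terms are as follows.
\begin{itemize}
\item \(X+Y=2\theta A_h+2\theta^3D^E_{3,h}+O(\theta^4)\), since the even terms cancel.
\item In \([X,Y]\) the \(\theta^2\) term is \([\theta A_h,\theta A_h]=0\). The \(\theta^3\) terms are \(\theta^3[A_h,D^E_{2,h}]-\theta^3[D^E_{2,h},A_h]=2\theta^3[A_h,D^E_{2,h}]\), so \(\tfrac12[X,Y]=\theta^3[A_h,D^E_{2,h}]+O(\theta^4)\).
\item Because \([X,Y]=O(\theta^3)\), every iterated commutator containing it is \(O(\theta^4)\). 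This is the one point needing care: the naive third-order BCH terms \([\theta A_h,[\theta A_h,\theta A_h]]\) vanish identically, so nothing else enters at order \(\theta^3\).
\end{itemize}
Hence \(\log E_h^{\rm S}(\tau)=2\theta A_h+\theta^3\big(2D^E_{3,h}+[A_h,D^E_{2,h}]\big)+O(\theta^4)\). Substituting \(\theta=\tau/2\) gives \(\tau A_h+\frac{\tau^3}{8}\big(2D^E_{3,h}+[A_h,D^E_{2,h}]\big)+O(\tau^4)\).

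Finally, the remainder improves from \(O(\tau^4)\) to \(O(\tau^5)\). By Theorem~\ref{thm:S_EEI_structure}, \(E_h^{\rm S}\) is method self-adjoint, so Lemma~\ref{lem:odd_log_self_adjoint} says its local logarithm is odd in \(\tau\). The \(\tau^4\) coefficient therefore vanishes, which gives \eqref{eq:S_EEI_cubic}.

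The main obstacle is bookkeeping rather than substance: the BCH truncation must be justified in the fixed finite-dimensional setting for small \(|\tau|\), where analyticity of the principal logarithm is assumed, and no \(\theta^3\) contribution may be missed. The argument is closed by the observation that all second- and higher-order BCH brackets built only from \(\theta A_h\) vanish.
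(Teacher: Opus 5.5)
Your proposal is correct and is essentially the paper's own argument. You write \(E_h^{\rm S}(\tau)=\exp(-Z(-\tau/2))\exp(Z(\tau/2))\), apply BCH through third order to get \(2sA_h+s^3\bigl(2D^E_{3,h}+[A_h,D^E_{2,h}]\bigr)+O(s^4)\) with \(s=\tau/2\), and then use method self-adjointness (Theorem~\ref{thm:S_EEI_structure} together with Lemma~\ref{lem:odd_log_self_adjoint}) to eliminate the \(\tau^4\) term. Your explicit check that \(\tfrac12[X,Y]=\theta^3[A_h,D^E_{2,h}]+O(\theta^4)\), and that every higher bracket is \(O(\theta^4)\), fills in the BCH bookkeeping the paper leaves implicit.
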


\begin{proof}
Let \(s=\tau/2\) and
\[
  Z(s):=\log E_h(s)=sA_h+s^2D^E_{2,h}+s^3D^E_{3,h}+O(s^4).
\]
Since \(E_h^{\rm S}(\tau)=E_h(-s)^{-1}E_h(s)\),
\[
  E_h^{\rm S}(\tau)=\exp(-Z(-s))\exp(Z(s)),
  \qquad
  -Z(-s)=sA_h-s^2D^E_{2,h}+s^3D^E_{3,h}+O(s^4).
\]
The BCH formula gives
\[
  \log E_h^{\rm S}(\tau)
  =2sA_h+s^3\left(2D^E_{3,h}+[A_h,D^E_{2,h}]\right)+O(s^4).
\]
By Theorem~\ref{thm:S_EEI_structure}, \(E_h^{\rm S}\) is method self-adjoint, so its local logarithm has only odd powers; hence the remainder is in fact \(O(s^5)\). Substituting \(s=\tau/2\) proves \eqref{eq:S_EEI_cubic}.
\end{proof}

Thus adjoint symmetrization removes \(D^E_{2,h}\) as an even obstruction, but the same fixed-grid EEI defect reappears in the leading odd error constant through \([A_h,D^E_{2,h}]\).

\subsection{Palindromic generalized shear propagator}
\label{subsec:GSH}

We next construct another fixed-grid admissible propagator from reversible shear and Fourier multiplier blocks. Decompose
\[
  A_h=A_{\Delta,h}+A_{x,h}+A_{y,h}+A_{z,h},\quad
  A_{\Delta,h}=i\Delta_h/2,\quad
  A_{\alpha,h}=i\Omega_\alpha L_{\alpha,h}\;\;(\alpha = x,y,z).
\]
Let \(D_{\Delta,h}(\tau):=\exp(\tau A_{\Delta,h})\) be the Laplacian Fourier multiplier. For each rotation axis \(\alpha\), let \(R_{\alpha,h}(\tau)\) be the corresponding fixed-grid axis-wise shear rotation block. For example, the continuous \(z\)-axis rotation admits the three-shear factorization
\[
  S_x(-\tan(\theta_z/2))S_y(\sin\theta_z)S_x(-\tan(\theta_z/2)),
  \qquad \theta_z=\tau\Omega_z,
\]
and \(R_{z,h}(\tau)\) denotes its Fourier-grid realization; the \(x\)- and \(y\)-axis blocks are obtained by permuting coordinates. The structural properties used below are
\begin{equation}
  R_{\alpha,h}(-\tau)R_{\alpha,h}(\tau)=I,\qquad
  R_{\alpha,h}(\tau)=I+\tau A_{\alpha,h}+O(\tau^2),
  \label{eq:axis_block_properties}
\end{equation}
together with unitarity. Such shear-based rotation mappings are standard in rotating-condensate computations, including RSDA-type axial constructions
\cite{BernierCasasCrouseilles2020,LiuYuanZhang2025,LiuXieYuanZhangZhao2026RSDA4}.

Define a palindromic rotation block, for example, 
\begin{equation}
  R_h^{\rm G}(\tau)
  :=R_{x,h}(\tau/2)R_{y,h}(\tau/2)R_{z,h}(\tau)
    R_{y,h}(\tau/2)R_{x,h}(\tau/2),
  \label{eq:GSH_rotation_block}
\end{equation}
and the full palindromic generalized shear (GSH) propagator
\begin{equation}
  G_h(\tau):=D_{\Delta,h}(\tau/2)R_h^{\rm G}(\tau)D_{\Delta,h}(\tau/2).
  \label{eq:GSH_def}
\end{equation}
Other palindromic orderings of the active axis blocks have the same parity structure, with different cubic error constants.

\begin{theorem}[Structure of the palindromic GSH propagator]
\label{thm:GSH_structure}
The GSH propagator \(G_h(\tau)\) defined in \eqref{eq:GSH_def} is unitary, first-order consistent with \(A_h\), and method self-adjoint. Hence, whenever a local logarithm exists,
\[
  \log G_h(\tau)=\tau A_h+\tau^3G_{3,h}+O(\tau^5).
\]
Equivalently, if we write \(\log G_h(\tau)=\tau A_h+\tau^2D^G_{2,h}+O(\tau^3)\), then \(D^G_{2,h}=0\) on \(X_h\).
\end{theorem}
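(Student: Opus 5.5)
The plan is to verify the three structural properties of \(G_h(\tau)\) directly from the block properties \eqref{eq:axis_block_properties} and the exact-exponential nature of \(D_{\Delta,h}\), and then obtain the odd local logarithm from Lemma~\ref{lem:odd_log_self_adjoint}.

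\emph{Unitarity.} The Laplacian multiplier \(D_{\Delta,h}(\tau/2)=\exp(\tau A_{\Delta,h}/2)\) is the exponential of the skew-Hermitian matrix \(i\Delta_h/2\), so it is unitary. Each axis block \(R_{\alpha,h}\) is unitary by assumption, since its shear stages are realized by FFTs and unit-modulus phase multipliers. The product \eqref{eq:GSH_def} is therefore unitary.

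\emph{First-order consistency.} Each factor has the form \(I+c\tau X+O(\tau^2)\): the outer factors give \(\tfrac12 A_{\Delta,h}\) twice, the \(x\)- and \(y\)-blocks appear twice with weight \(\tfrac12\), and the \(z\)-block appears once with weight \(1\). Multiplying out, the first-order coefficient is the sum of the individual generators weighted by these coefficients, which is \(A_{\Delta,h}+A_{x,h}+A_{y,h}+A_{z,h}=A_h\).

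\emph{Method self-adjointness.} This is the key step. Write
\[
G_h(\tau)=F_1(\tau)F_2(\tau)F_3(\tau)F_4(\tau)F_5(\tau)F_4(\tau)F_3(\tau)F_2(\tau)F_1(\tau),
\]
with \(F_1=D_{\Delta,h}(\cdot/2)\), \(F_2=R_{x,h}(\cdot/2)\), \(F_3=R_{y,h}(\cdot/2)\), \(F_4\) omitted (or trivial), and \(F_5=R_{z,h}(\cdot)\). Each factor satisfies \(F_j(-\tau)F_j(\tau)=I\). For \(D_{\Delta,h}\) this holds because it is a one-parameter group; for the axis blocks it is the first identity in \eqref{eq:axis_block_properties}.

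Consequently \(F_j^\dagger=F_j\) in the sense of method adjoints. Using \((\Phi\Psi)^\dagger=\Psi^\dagger\Phi^\dagger\), the adjoint of a palindromic product of self-adjoint factors is the same product in reversed order, which for a palindrome is the original product. Concretely, in \(G_h(-\tau)G_h(\tau)\) the innermost pair \(F_1(-\tau)F_1(\tau)\) collapses to \(I\), then the next pair \(F_2(-\tau)F_2(\tau)\) collapses, and so on inward to \(F_5(-\tau)F_5(\tau)=I\). Hence \(G_h(-\tau)G_h(\tau)=I\).

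\emph{Odd logarithm.} \(G_h(\tau)\) is analytic in \(\tau\), because its factors are matrix exponentials or finite products of FFT and phase-multiplier matrices whose entries depend analytically on \(\tau\), as the shear coefficients \(\tan(\theta/2)\) and \(\sin\theta\) do near \(0\). It also satisfies \(G_h(0)=I\). Lemma~\ref{lem:odd_log_self_adjoint} then gives that \(\log G_h(\tau)\) contains only odd powers of \(\tau\), with leading coefficient \(A_h\) by consistency. Thus \(\log G_h(\tau)=\tau A_h+\tau^3G_{3,h}+O(\tau^5)\) and \(D^G_{2,h}=0\) as an operator on \(X_h\).

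\emph{Main obstacle.} The one point needing care is that the Fourier-grid realization of each axis three-shear block is itself exactly reversible on \(X_h\). This requires \(S_x(-\tan(\theta/2))S_y(\sin\theta)S_x(-\tan(\theta/2))\) to be realized so that each discrete shear \(S_{x,h}(a)\) satisfies \(S_{x,h}(-a)S_{x,h}(a)=I\); this holds for FFT phase-shift realizations because they multiply by \(e^{-iak_x y}\) in mixed space. Combined with the palindromic ordering and the oddness of \(\tan(\theta/2)\) and \(\sin\theta\) in \(\theta\), this yields \eqref{eq:axis_block_properties}. I would state this as the standing assumption the theorem uses, as the paper does, rather than re-derive it.
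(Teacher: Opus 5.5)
Your proposal is correct and follows essentially the same route as the paper. The paper reads off unitarity and first-order consistency factor by factor, obtains method self-adjointness from the palindromic arrangement of individually reversible factors (each axis block being reversible because it is itself a palindrome of sign-reversible FFT shears), and then applies Lemma~\ref{lem:odd_log_self_adjoint}; your explicit pairwise collapse of \(G_h(-\tau)G_h(\tau)\) and your remark on the analyticity of \(\tan(\theta/2)\) and \(\sin\theta\) only spell out steps the paper states more tersely.
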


\begin{proof}
The Laplacian block is a unit-modulus Fourier multiplier, hence unitary, method self-adjoint, and first-order consistent with \(A_{\Delta,h}\). Each axis-wise shear block is unitary in the FFT realization, is inverted by changing the sign of the shear parameter, and, because the axis block is palindromic in elementary shears, satisfies \(R_{\alpha,h}(-\tau)R_{\alpha,h}(\tau)=I\). Its Taylor expansion gives \(R_{\alpha,h}(\tau)=I+\tau A_{\alpha,h}+O(\tau^2)\). The palindromic products defining \(R_h^G\) and then \(G_h\) are therefore method self-adjoint and unitary, and their first-order expansions sum to \(A_{\Delta,h}+A_{x,h}+A_{y,h}+A_{z,h}=A_h\). Lemma~\ref{lem:odd_log_self_adjoint} gives the odd local-logarithm expansion.
\end{proof}

\begin{remark}[Raw shear tails and palindromic cancellation]
The absence of a quadratic local-logarithmic term in Theorem~\ref{thm:GSH_structure} is a property of the complete palindromic GSH propagator, not of each elementary shear stage. For a single shear
\[
    S_{a\leftarrow b}(\sigma)u({\bf x})
    =
    u(x_1,\ldots,x_a+\sigma x_b,\ldots,x_d),
\]
Taylor expansion gives, for \(v_h\in X_h\),
\[
    (I-P_h)S_{a\leftarrow b}(\sigma)v_h
    =
    \sigma(I-P_h)(x_b\partial_{x_a}v_h)+O(\sigma^2).
\]
Thus individual shear stages may generate unresolved-tail or aliasing residuals. These residuals vanish for constants, are derivative- and direction-selective on Fourier modes, and are masked for well-resolved localized Gaussian-polynomial data. In a nonsymmetric shear ordering they may contribute to a quadratic even local-logarithmic term. For the palindromic GSH propagator, Theorem~\ref{thm:GSH_structure} gives \(D^G_{2,h}=0\) as an operator on \(X_h\); raw shear tails may affect higher odd-order constants but do not create an EEI-type quadratic even-parity obstruction.
\end{remark}

\subsection{Cubic modified generator of palindromic GSH}
\label{subsec:GSH_cubic}

Let us analyze the formal cubic modified generator of \(G_h\). Assume that each axis-wise rotation block has the odd local logarithm
\begin{equation}
  \log R_{\alpha,h}(\tau)=\tau A_{\alpha,h}+\tau^3C_{\alpha,h}+O(\tau^5),\qquad \alpha=x,y,z,
  \label{eq:axis_block_cubic_log}
\end{equation}
where \(C_{\alpha,h}\) is the intrinsic cubic term of the corresponding three-shear axis block, and set \(A_{R,h}:=A_{x,h}+A_{y,h}+A_{z,h}\). For a Strang product with outer generator \(X\) and central generator \(Y\), define
\begin{equation}
  {\mathcal E}_3(X,Y):=-\frac1{24}[X,[X,Y]]-\frac1{12}[Y,[X,Y]],
  \label{eq:E3_commutator}
\end{equation}
so that \(\log\!\left(e^{\tau X/2}e^{\tau Y}e^{\tau X/2}\right)=\tau(X+Y)+\tau^3{\mathcal E}_3(X,Y)+O(\tau^5)\). 
A formal BCH expansion of the palindromic rotation block \eqref{eq:GSH_rotation_block} gives
\begin{align}
  & \log R_h^{\rm G}(\tau)=\tau A_{R,h}+\tau^3C^R_{3,h}+O(\tau^5),\; \mbox{ with}
  \label{eq:GSH_rotation_cubic_log} \\
  &C^R_{3,h}=\frac14C_{x,h}+\frac14C_{y,h}+C_{z,h}+{\mathcal E}_3(A_{y,h},A_{z,h})+{\mathcal E}_3(A_{x,h},A_{y,h}+A_{z,h}).
  \label{eq:GSH_rotation_cubic}
\end{align}
The factors \(1/4,1/4\), and \(1\) come from the two half appearances of the \(x\)- and \(y\)-axis blocks and the full central \(z\)-axis block. Other palindromic axis orderings have the same odd-parity structure but different cubic constants.

Since \(D_{\Delta,h}(\tau)=\exp(\tau A_{\Delta,h})\),  \(G_h(\tau)=D_{\Delta,h}(\tau/2)R_h^{\rm G}(\tau)D_{\Delta,h}(\tau/2)\) satisfies
\begin{align}\label{eq:GSH_cubic_log}
  \log G_h(\tau) &=\tau A_h+\tau^3H^G_{3,h}+O(\tau^5),\,\mbox{ where} \\ 
  H^G_{3,h} &=C^R_{3,h}+{\mathcal E}_3(A_{\Delta,h},A_{R,h}).
  \label{eq:GSH_cubic_generator}
\end{align}
So the cubic generator of GSH consists of intrinsic axis-block cubic terms and splitting commutators among the Laplacian and rotation generators. Unlike the symmetrized EEI cubic generator \eqref{eq:S_EEI_cubic}, it has no inherited term of the form \([A_h,D^E_{2,h}]\).

\subsection{Statewise comparison of cubic modified generators}
\label{subsec:statewise_cubic_comparison}

The two admissible propagators have the same parity structure but different cubic error mechanisms. For the symmetrized EEI propagator, \eqref{eq:S_EEI_cubic} gives
\[
  H^S_{3,h}:=\frac14D^E_{3,h}+\frac18[A_h,D^E_{2,h}],
\]
whereas the GSH cubic generator is given by \eqref{eq:GSH_cubic_generator}. Thus \(H^S_{3,h}\) can inherit the fixed-grid EEI defect through \([A_h,D^E_{2,h}]\), while \(H^G_{3,h}\) consists of intrinsic axis-block cubic terms and splitting commutators.

For the constant state, every shear block and the Laplacian multiplier preserve it:
\[
  D_{\Delta,h}(\tau)1_h=1_h,\qquad R_{\alpha,h}(\tau)1_h=1_h .
\]
Hence \(G_h(\tau)1_h=1_h\) and \(H^G_{3,h}1_h=0\). By contrast, the symmetrized EEI cubic generator may be nonzero. In fact, since \(A_h1_h=0\), we have
\[
  [A_h,D^E_{2,h}]1_h=A_hD^E_{2,h}1_h-D^E_{2,h}A_h1_h=A_hD^E_{2,h}1_h,
\]
and Section~\ref{sec:eei_defect} shows that the first EEI quadratic phase can already generate an algebraic unresolved tail through \(q_0\cdot1_h=q_0\). Thus the constant state cleanly separates the two cubic mechanisms.

For Fourier modes, no uniform ordering between the two cubic constants should be expected. The S-EEI contribution is tied to multiplication by the coordinate quadratic \(q_0\), whereas the GSH shear contribution is derivative- and direction-selective. For instance, under pure \(z\)-axis rotation, the GSH shear block acts only in the \(x\)-\(y\) plane and leaves \(e^{ik_zz}\) unchanged, so \(H^G_{3,h}e^{ik_zz}=0\). For nearly axial rotations, the remaining shear contributions are proportional to the small transverse components.

For localized Gaussian-polynomial data \(v=p({\bf x})e^{-\alpha|{\bf x}|^2}\), both propagators are typically masked when the box and Fourier cutoff resolve the profile: \(q_0v\), \(x_b\partial_{x_a}v\), and the sheared profiles remain localized Gaussian-type functions. Thus \(H^S_{3,h}v_h\) and \(H^G_{3,h}v_h\) may both lie near the spatial, reference-solution, or roundoff floor. For cutoff-near Fourier modes or broadband trigonometric data, both cubic generators can be large: S-EEI may inherit cutoff-sensitive fixed-grid EEI defects, while GSH may be amplified by high-frequency shear and splitting commutators. In both cases, however, the leading defect is cubic and odd in \(\tau\), not a quadratic even obstruction.

\subsection{Admissibility for high-order symmetric compositions}
\label{subsec:admissibility_high_order}

We summarize this section by translating the fixed-grid linear structure into the corresponding nonlinear splitting consequences in temporal accuracy. This yields the main structural theorem of this study: the original EEI propagator carries a quadratic even-parity obstruction if it is applied to a visible state, whereas the symmetrized EEI and palindromic GSH propagators remove this conditional obstruction and provide the fixed-grid parity structure required by high-order symmetric composition.

\begin{theorem}[Fixed-grid structural consequences for the three linear propagators]
\label{thm:admissibility_high_order}
Consider the fixed Fourier pseudospectral rotational NLS system and the exact density-dependent nonlinear phase flow \({\mathcal N}_h(\tau)\). Let
\(S^{E}_{2,h}(\tau) = E_h(\tau/2)\circ {\mathcal N}_h(\tau)\circ E_h(\tau/2)\), \(S^{S}_{2,h}(\tau) = E^S_h(\tau/2)\circ {\mathcal N}_h(\tau)\circ E^S_h(\tau/2)\), and \(S^{G}_{2,h}(\tau) = G_h(\tau/2)\circ {\mathcal N}_h(\tau)\circ G_h(\tau/2)\) denote the Strang blocks based on the original EEI, the symmetrized EEI, and the palindromic GSH propagators, respectively. Assume the local logarithms exist on the fixed grid.

\textup{(i)} If the original EEI map satisfies
\[
    \log E_h(\tau)
    =
    \tau A_h+\tau^2D^E_{2,h}+O(\tau^3),
    \qquad
    D^E_{2,h}\not\equiv 0,
\]
then
\[
    \log S^{E}_{2,h}(\tau)
    = \textstyle
    \tau F_h+\frac{\tau^2}{2}\mathcal D^E_{2,h}+O(\tau^3),
    \qquad
    \mathcal D^E_{2,h}(\psi_h)=D^E_{2,h}\psi_h .
\]
Consequently, every real consistent composition of \(S^{E}_{2,h}\) with coefficients satisfying \(\sum_j a_j=1\) retains the quadratic even defect
\[
    \textstyle \frac12
    \left(\sum_j a_j^2\right)
    \tau^2\mathcal D^E_{2,h}
\]
at the modified-logarithm level. Hence fourth- and sixth-order design behavior is structurally obstructed on states for which \(D^E_{2,h}\psi_h\) is visible above the effective resolution floor.

\textup{(ii)} The symmetrized EEI propagator \(E^S_h\) is unitary, first-order consistent, and method self-adjoint on the fixed grid. Its local logarithm has the form
\[
    \log E^S_h(\tau)
    =
    \tau A_h+\tau^3H^S_{3,h}+O(\tau^5),
\]
so the associated Strang block satisfies
\[
    \log S^{S}_{2,h}(\tau)
    =
    \tau F_h+\tau^3K^S_{3,h}+O(\tau^5).
\]
Thus \(S^{S}_{2,h}\) has no quadratic even-parity obstruction and satisfies the fixed-grid structural assumptions required by fourth- and sixth-order symmetric compositions. Its cubic constant may still reflect the original EEI defect identified in Proposition~\ref{prop:S_EEI_cubic}.

\textup{(iii)} The palindromic GSH propagator \(G_h\) is unitary, first-order consistent, and method self-adjoint on the fixed grid. Its local logarithm has the form
\[
    \log G_h(\tau)
    =
    \tau A_h+\tau^3H^G_{3,h}+O(\tau^5),
\]
and the associated Strang block satisfies
\[
    \log S^{G}_{2,h}(\tau)
    =
    \tau F_h+\tau^3K^G_{3,h}+O(\tau^5).
\]
Thus \(S^{G}_{2,h}\) also has no quadratic even-parity obstruction and satisfies the fixed-grid structural assumptions required by fourth- and sixth-order symmetric compositions. Its cubic mechanism is generated by palindromic shear and splitting commutators.
\end{theorem}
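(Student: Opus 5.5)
The theorem collects results already established, so the plan is to assemble them part by part. The only genuinely new ingredient is the passage from a linear propagator with odd local logarithm to a Strang block with odd modified logarithm.

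For part (i), invoke Theorem~\ref{thm:even_defect_composition} directly. Its hypotheses are identical: the expansion of \(\log E_h\) with \(D^E_{2,h}\not\equiv0\), and the exact phase flow \({\mathcal N}_h\), which is the time-\(\tau\) flow of \(B_h\). Its conclusions give the displayed expansion \(\log S^E_{2,h}=\tau F_h+\frac{\tau^2}{2}\mathcal D^E_{2,h}+O(\tau^3)\). They also show that the coefficient of the quadratic term under a consistent real composition is \(\frac12(\sum_j a_j^2)\mathcal D^E_{2,h}\), which is strictly positive in weight. The closing obstruction statement then follows from Corollary~\ref{cor:state_dependent_composition_obstruction}.

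For parts (ii) and (iii), the linear-level statements are Theorem~\ref{thm:S_EEI_structure} (together with Proposition~\ref{prop:S_EEI_cubic}, which identifies \(H^S_{3,h}=\frac14D^E_{3,h}+\frac18[A_h,D^E_{2,h}]\)) and Theorem~\ref{thm:GSH_structure} (with the cubic generator \eqref{eq:GSH_cubic_generator}). These give unitarity, first-order consistency, method self-adjointness, and odd local logarithms. It remains to lift to the Strang blocks, and both cases run by one argument with \(L_h\in\{E^S_h,G_h\}\).
\begin{itemize}
\item \textbf{Self-adjointness.} By Lemma~\ref{lem:nonlinear_phase_reversible}, \({\mathcal N}_h\) is method self-adjoint. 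The rule \((\Phi\circ\Psi)^\dagger=\Psi^\dagger\circ\Phi^\dagger\) recorded in Section~\ref{subsec:strang_composition_prelim} then makes \(S^L_{2,h}(\tau)=L_h(\tau/2)\circ{\mathcal N}_h(\tau)\circ L_h(\tau/2)\) method self-adjoint.
\item \textbf{Consistency.} First-order consistency of \(L_h\) with \(A_h\) and of \({\mathcal N}_h\) with \(B_h\) gives \(S^L_{2,h}(\tau)=\mathrm{id}+\tau F_h+O(\tau^2)\).
\item \textbf{Odd modified logarithm.} Write \(\log S^L_{2,h}(\tau)=\sum_{m\ge1}\tau^m K_m\) in the finite-dimensional smooth-flow (BCH/Lie-series) sense assumed in the statement, with \(K_1=F_h\). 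The relation \(S^L_{2,h}(-\tau)\circ S^L_{2,h}(\tau)=\mathrm{id}\) reads \(\exp(Z(-\tau))\exp(Z(\tau))=\mathrm{id}\), so \(Z(-\tau)=-Z(\tau)\). This is the nonlinear analogue of Lemma~\ref{lem:odd_log_self_adjoint}, and it forces \(K_{2j}=0\).
\end{itemize}
As a consistency check, the BCH computation in the proof of Theorem~\ref{thm:even_defect_composition} repeated with \(D_{2,h}=0\) shows directly that the \(\tau^2\) coefficient vanishes. The \(\tau^4\) coefficient vanishes by parity. Setting \(K^S_{3,h},K^G_{3,h}:=K_3\) finishes the expansions. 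Admissibility for Yoshida-type fourth- and sixth-order compositions then follows from the framework recalled in Section~\ref{subsec:strang_composition_prelim}.

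The main obstacle is the nonlinear parity step. For a nonlinear map, the "logarithm" has to be interpreted as a formal modified vector field (Lie series) rather than a matrix logarithm, and one needs uniqueness of that formal log to conclude \(Z(-\tau)=-Z(\tau)\) from invertibility. I would handle this by working with the formal series in the algebra of smooth vector fields on the finite-dimensional \(X_h\), as the statement's assumption permits. In that setting the inverse of \(\exp(Z)\) is \(\exp(-Z)\) and the formal logarithm of a near-identity map is unique, which mirrors the matrix case coefficientwise. The remaining claims about the cubic constants are just pointers to Proposition~\ref{prop:S_EEI_cubic} and \eqref{eq:GSH_cubic_generator}.
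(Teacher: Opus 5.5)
Your proposal is correct and follows the paper's proof essentially verbatim. Part (i) is cited from Theorem~\ref{thm:even_defect_composition} and Corollary~\ref{cor:state_dependent_composition_obstruction}, and parts (ii)--(iii) combine Theorems~\ref{thm:S_EEI_structure} and~\ref{thm:GSH_structure} with Lemma~\ref{lem:nonlinear_phase_reversible}, the adjoint rule, and the cubic formulas of Proposition~\ref{prop:S_EEI_cubic} and \eqref{eq:GSH_cubic_generator}. The only difference is that you make the nonlinear parity step explicit (uniqueness of the formal modified vector field forcing \(Z(-\tau)=-Z(\tau)\)), whereas the paper simply delegates it to the standard self-adjoint Strang-block argument of Section~\ref{subsec:strang_composition_prelim}.
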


\begin{proof} 
Part (i) is exactly Theorem~\ref{thm:even_defect_composition}, together with the statewise interpretation in Corollary~\ref{cor:state_dependent_composition_obstruction}. For part (ii), Theorem~\ref{thm:S_EEI_structure} gives unitarity, first-order consistency, method self-adjointness, and the odd local logarithm of \(E_h^{\rm S}\); combining this with the reversibility of \({\mathcal N}_h\) from Lemma~\ref{lem:nonlinear_phase_reversible} and the standard Strang-block argument recalled in Section~\ref{subsec:strang_composition_prelim} gives the stated properties of \(S^S_{2,h}\). The cubic mechanism follows from Proposition~\ref{prop:S_EEI_cubic}. Part (iii) follows in the same way from Theorem~\ref{thm:GSH_structure}, the reversibility of \({\mathcal N}_h\), and the standard self-adjoint Strang-block and symmetric-composition arguments in Section~\ref{subsec:strang_composition_prelim}; the stated GSH cubic mechanism follows from the BCH expansion in Section~\ref{subsec:GSH_cubic}.
\end{proof}

Thus the unresolved-tail, aliasing, and raw shear residuals analyzed above may affect state-dependent cubic error constants, but for \(E_h^{\rm S}\) and \(G_h\) they do not create a quadratic even-parity obstruction. This is the fixed-grid structural condition needed for robust fourth- and sixth-order symmetric compositions.

\section{Numerical experiments}
\label{sec:numerics}

\subsection{Diagnostics and experimental setup}
\label{subsec:numerical_setup}

The experiments test the structural mechanisms of linear propagators analyzed in Sections~\ref{sec:eei_defect} and~\ref{sec:admissible_propagators}. All operator diagnostics are computed on a fixed Fourier pseudospectral space \(X_h\), so the reported defects are properties of finite-dimensional one-step maps rather than artifacts of changing the spatial mesh. We compare the original EEI propagator \(E_h(\tau)\), the symmetrized EEI propagator \(E_h^{\rm S}(\tau)\), and the palindromic GSH propagator \(G_h(\tau)\). A generic discrete linear propagator is denoted by \(L_h(\tau)\).

For a linear propagator \(L_h(\tau):X_h\to X_h\), we use the statewise reversibility and equal-step group defects
\begin{align}
  {\mathcal R}_L(v_h;\tau)&:=\|L_h(-\tau)L_h(\tau)v_h-v_h\|_h/\|v_h\|_h, \;\mbox{ and}
  \label{eq:RL_def} \\ 
  {\mathcal G}_L(v_h;\tau)&:=\|L_h(\tau)^2v_h-L_h(2\tau)v_h\|_h/\|v_h\|_h,
  \label{eq:GL_def}
\end{align}
for nonzero \(v_h\in X_h\). If \(\log L_h(\tau)=\tau A_h+\tau^2D_{2,h}+O(\tau^3)\), these diagnostics are expected to scale quadratically on states for which \(D_{2,h}v_h\) is visible. If \(L_h\) is method self-adjoint and \(\log L_h(\tau)=\tau A_h+\tau^3H_{3,h}+O(\tau^5)\), then \({\mathcal R}_L\) is zero up to roundoff and the equal-step group defect has a cubic leading term. Thus log--log slopes of \({\mathcal R}_L\) and \({\mathcal G}_L\) directly probe the local-logarithmic parity of the discrete propagator.

We also use oversampled unresolved-tail diagnostics to test the representation residual mechanisms. Let \(h_{\rm f}<h\) be a finer mesh size on the same box, with projections \(P_{h_{\rm f}}\) and \(P_h\) satisfying \(X_h\subset X_{h_{\rm f}}\); unless otherwise stated, \(h_{\rm f}=h/2\). The high-frequency component \((P_{h_{\rm f}}-P_h)w\) is used as a finite-band approximation of the unresolved Fourier tail of \(w\) outside \(X_h\).

For the original EEI map, the first quadratic phase stage satisfies \((I-P_h)Q_1(\tau)v_h=-i\tau(I-P_h)(q_0v_h)+O(\tau^2)\), where \(q_0({\bf x})={\bf x}^T\mathsf P_0{\bf x}\). We therefore define
\begin{equation}
  {\mathcal T}_E(v;h,h_{\rm f}):=\|(P_{h_{\rm f}}-P_h)(q_0v)\|_{h_{\rm f}}/\|v\|_{h_{\rm f}}.
  \label{eq:TE_tail_diagnostic}
\end{equation}
This diagnostic measures the leading quadratic-phase unresolved-tail source, which can contribute statewise to the EEI quadratic coefficient \(D^E_{2,h}\).

For a single shear \(S_{x\leftarrow y}\), the leading unresolved-tail residual is governed by \(y\partial_xv\). Define
\begin{equation}
  {\mathcal T}^{x\leftarrow y}_G(v;h,h_{\rm f}):=\|(P_{h_{\rm f}}-P_h)(y\partial_xv)\|_{h_{\rm f}}/\|v\|_{h_{\rm f}}.
  \label{eq:TG_tail_diagnostic}
\end{equation}
This is not a quadratic defect diagnostic for the palindromic GSH propagator; it only measures the raw stagewise residual of an elementary shear and is included to compare the state dependence of shear-induced and EEI quadratic-phase residuals.

\subsection{Unresolved Fourier tails}
\label{subsec:unresolved_tail_diagnostics}

We first isolate the unresolved Fourier-tail mechanisms of Sections~\ref{sec:eei_defect} and \ref{sec:admissible_propagators}. These tests do not measure the error of a full time step; they measure, on an oversampled Fourier grid, the part of the leading stage profile lying outside the original space \(X_h\). The computations are performed on \([-\pi,\pi]^3\), with \(h_{\rm f}=h/2\), \(N=24,32,48,64,96,128\), and \(\Omega=(-0.7,1,-\sqrt{3})\). For the EEI diagnostic, \(q_0({\bf x})={\bf x}^T\mathsf P_0{\bf x}\) is approximated by \(q_{\tau_0}\) with \(\tau_0=10^{-4}\). In this test, \(\mathsf P_0\) is diagonal-dominated, so the leading quadratic phase is governed primarily by coordinate-square terms.

\begin{figure}[hbtp!]
  \centering
  \includegraphics[width=5.12in]{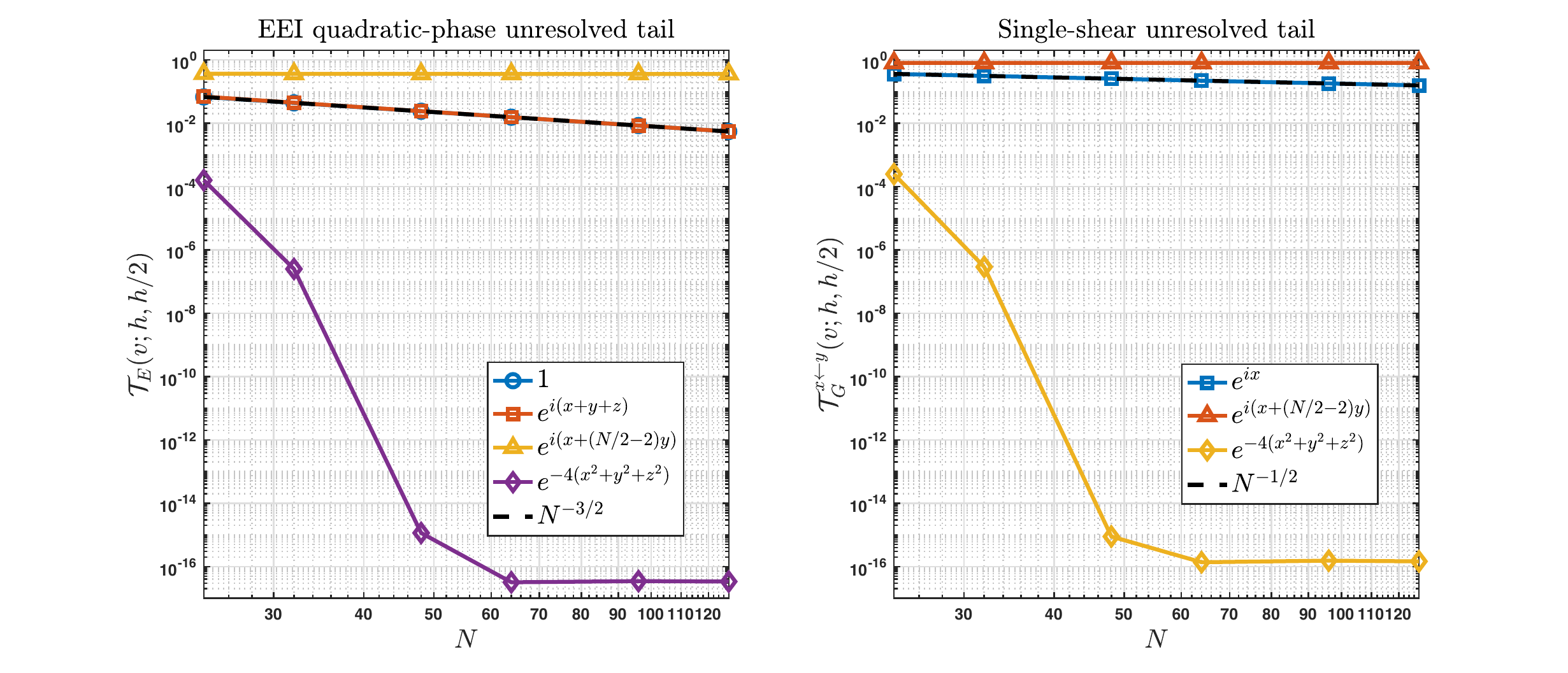}
  \caption{Oversampled unresolved-tail diagnostics with \(h_{\rm f}=h/2\) on \([-\pi,\pi]^3\). Left: EEI quadratic-phase diagnostic \({\mathcal T}_E(v;h,h/2)\). Right: single-shear diagnostic \({\mathcal T}^{x\leftarrow y}_G(v;h,h/2)\). The cutoff-near probe is \(e^{i(x+(N/2-2)y)}\).}
  \label{fig:unresolved_tail_diagnostics}
\end{figure}

Figure~\ref{fig:unresolved_tail_diagnostics} shows the oversampled diagnostics \eqref{eq:TE_tail_diagnostic} and \eqref{eq:TG_tail_diagnostic}. For the EEI quadratic-phase diagnostic \({\mathcal T}_E(v;h,h/2)\), the constant state and the fixed low Fourier mode \(e^{i(x+y+z)}\) both exhibit the \(\mathcal{O}(N^{-3/2})\) decay predicted for coordinate-square factors. The cutoff-near Fourier mode \(v_{k_h}({\bf x})=e^{i(x+(N/2-2)y)}\), however, produces an \(O(1)\) coefficient-level unresolved tail, confirming that cutoff-near Fourier modes can shift low Fourier coefficients of \(q_0\) into the unresolved band. The localized Gaussian \(e^{-4(x^2+y^2+z^2)}\) rapidly reaches the numerical floor with \(N\), illustrating the masking effect for boundary-negligible Gaussian data. This provides a plausible explanation for the fourth-order convergence observed in \cite{LiuZhang2025} for localized Gaussian-polynomial initial data: such profiles fall into the masked regime of Lemma~\ref{lem:gaussian_tail}.

The right panel tests the single-shear diagnostic for \(S_{x\leftarrow y}\), whose leading residual is governed by \(y\partial_xv\). The low mode \(e^{ix}\) follows the predicted \(\mathcal{O}(N^{-1/2})\) behavior, while the same cutoff-near mode again gives an \(O(1)\)-level tail. The omitted probes \(1\), \(e^{iy}\), and \(e^{ik_zz}\) give exactly zero residual because \(y\partial_xv=0\), verifying the derivative and direction selectivity of shear-induced tails. The Gaussian probe again reaches the numerical floor quickly as the grid is refined.

The two diagnostics have different structural meanings. The EEI quadratic-phase tail is a statewise source of the quadratic local-logarithmic coefficient \(D^E_{2,h}\) for the original non-self-adjoint EEI map, and it can also reappear in the symmetrized EEI cubic generator through \([A_h,D^E_{2,h}]\). The single-shear diagnostic, by contrast, is not a quadratic defect of the palindromic GSH propagator, for which \(D^G_{2,h}=0\) on \(X_h\). It measures only a raw stagewise shear residual, relevant to non-palindromic shear orderings and to higher odd-order GSH constants. Thus Figure~\ref{fig:unresolved_tail_diagnostics} identifies the statewise unresolved-tail sources; the next subsection tests their effect on complete fixed-grid propagators through reversibility and group-defect diagnostics.

\subsection{Quadratic parity defect of the original EEI propagator}
\label{subsec:eei_parity_diagnostics}

\begin{figure}[hbtp!]
  \centering
  \includegraphics[width=5.12in]{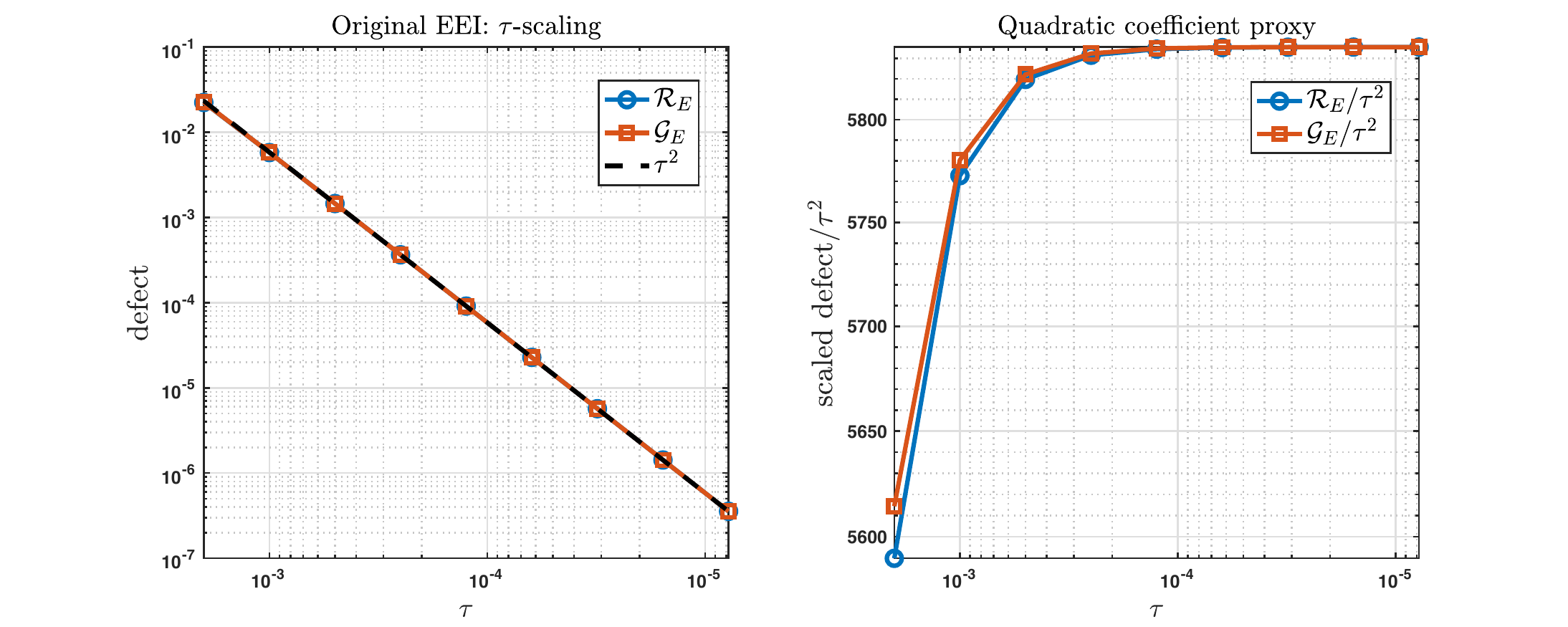}
  \includegraphics[width=5.12in]{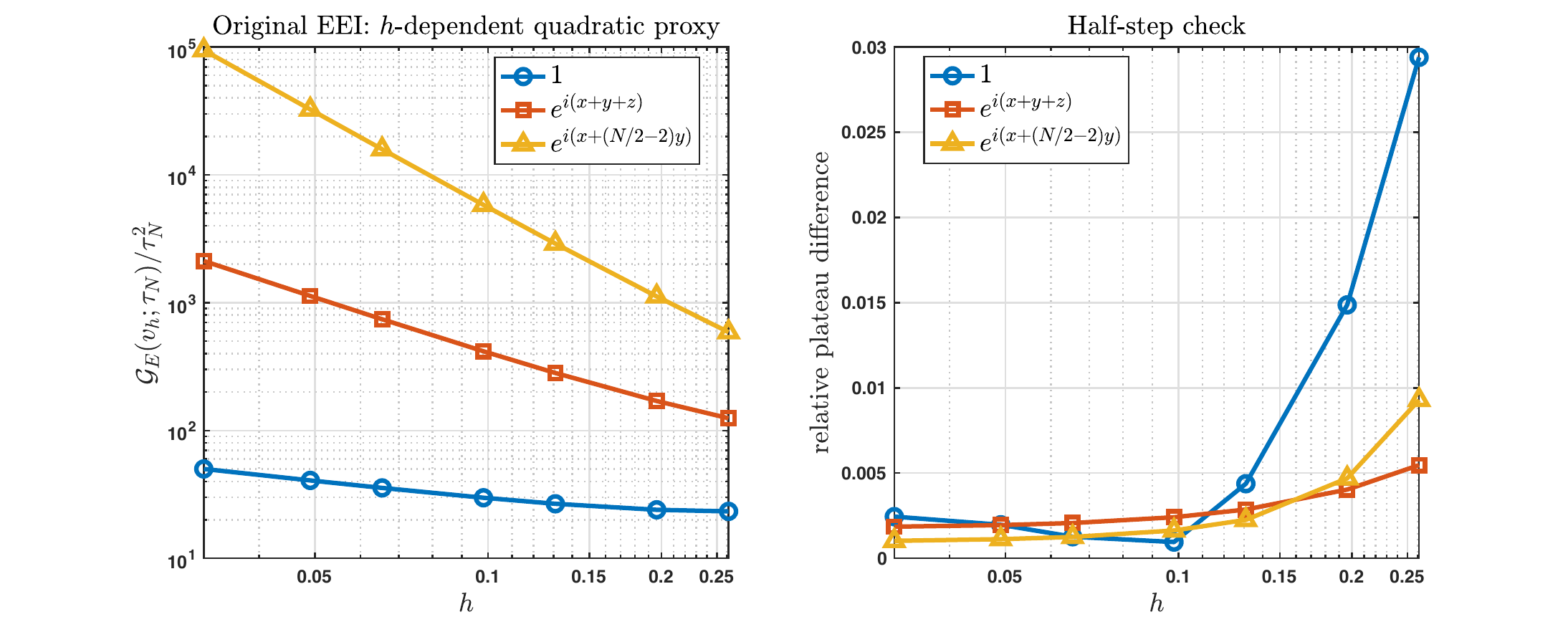}
  \caption{Structural diagnostics for the original fixed-grid EEI propagator. Panel (a): reversibility and equal-step group defects on the cutoff-near probe \(e^{i(x+(N/2-2)y)}\) at \(N=64\), showing quadratic \(\tau^2\) behavior. Panel (b): scaled defects \({\mathcal R}_E/\tau^2\) and \({\mathcal G}_E/\tau^2\), which approach a plateau. Panel (c): scaled group-defect proxy \(C_E(v_h;h)={\mathcal G}_E(v_h;\tau_N)/\tau_N^2\), \(\tau_N=0.05h^2\), for the constant state, the fixed low Fourier mode \(e^{i(x+y+z)}\), and the cutoff-dependent family. Panel (d): half-step check comparing \(\tau_N\) and \(\tau_N/2\), confirming the quadratic regime.}
  \label{fig:eei_parity_all}
\end{figure}

We next test whether the unresolved-tail and representation residuals identified above appear in the complete fixed-grid EEI propagator as a quadratic local-logarithmic defect. The diagnostics are computed entirely on the fixed space \(X_h\), using the full EEI map \(E_h(\tau)\). For the \(\tau\)-scaling test, we use the cutoff-near Fourier probe \(v_h({\bf x})=e^{i(x+(N/2-2)y)}\), with \(N=64\), \(\Omega=(-0.7,1,-\sqrt{3})\), and \({\mathcal D}=[-\pi,\pi]^3\).

Figure~\ref{fig:eei_parity_all}(a) shows the reversibility defect \({\mathcal R}_E(v_h;\tau)\) and the equal-step group defect \({\mathcal G}_E(v_h;\tau)\). Both follow the \(\tau^2\) reference slope in the asymptotic range; excluding the two largest time steps, the fitted slopes are approximately \(2\) for both diagnostics. Figure~\ref{fig:eei_parity_all}(b) shows that \({\mathcal R}_E/\tau^2\) and \({\mathcal G}_E/\tau^2\) approach a plateau as \(\tau\to0\), confirming that the observed behavior is a leading quadratic effect, not a large-step artifact.

We also test how this statewise quadratic defect behaves under spatial refinement. Let us define \(C_E(v_h;h):={\mathcal G}_E(v_h;\tau_N)/\tau_N^2,\mbox{ with } \tau_N=c_hh^2,\, c_h=0.05\), where \(h=2\pi/N\). The scaling \(\tau_N=O(h^2)\) keeps the highest Fourier time scale controlled for cutoff-dependent probes. Figure~\ref{fig:eei_parity_all}(c) plots \(C_E(v_h;h)\) for the constant state, the fixed low Fourier mode \(e^{i(x+y+z)}\), and the cutoff-dependent family \(e^{i(x+(N/2-2)y)}\). The Gaussian probe is omitted because its defect is masked by the numerical floor.

The results show that the visible EEI quadratic defect is not reduced at the spectral spatial-error rate by grid refinement. The constant state remains \(O(1)\) at the scaled-defect level, while the Fourier probes grow algebraically as \(h\) decreases, with the strongest amplification for the cutoff-dependent family. This is consistent with Section~\ref{sec:eei_defect}: unresolved Fourier tails are source-level mechanisms, while the full local-logarithmic coefficient also contains representation-level commutators and discrete differential factors that can amplify Fourier probes. Figure~\ref{fig:eei_parity_all}(d) compares \(C_E(v_h;h)\) computed with \(\tau_N\) and \(\tau_N/2\); the small relative differences confirm that the plotted values are in the quadratic regime.

\subsection{Admissible propagators and cubic group behavior}
\label{subsec:admissible_propagator_diagnostics}

We now test the complete fixed-grid propagators constructed in Section~\ref{sec:admissible_propagators}. The preceding subsection showed that the original EEI map has a visible quadratic defect. Here we verify that the two admissible propagators, the symmetrized EEI propagator \(E_h^{\rm S}(\tau)\) and the palindromic GSH propagator \(G_h(\tau)\), remove this quadratic obstruction at the level of the full one-step map. We use the same cutoff-near probe \(v_h({\bf x})=e^{i(x+(N/2-2)y)}\) on \([-\pi,\pi]^3\), with \(N=64\) and \(\Omega=(-0.7,1,-\sqrt{3})\).

Figure~\ref{fig:admissible_parity_diagnostics} compares \(E_h\), \(E_h^{\rm S}\), and \(G_h\). The left panel shows the reversibility defect \({\mathcal R}_L(v_h;\tau)\). The original EEI map exhibits the same quadratic behavior as before, whereas \(E_h^{\rm S}\) and \(G_h\) remain at the roundoff level throughout the tested range. This confirms that the two admissible propagators are method self-adjoint.

The right panel shows the equal-step group defect \({\mathcal G}_L(v_h;\tau)\). The original EEI propagator follows the \(\tau^2\) reference slope, while \(E_h^{\rm S}\) and \(G_h\) follow the \(\tau^3\) reference slope. Excluding the two largest time steps, the fitted slopes are approximately \(1.999\), \(2.996\), and \(2.999\) for \(E_h\), \(E_h^{\rm S}\), and \(G_h\), respectively. Thus the admissible propagators have the predicted odd local-logarithm behavior, in contrast to the non-reversible fixed-grid EEI realization.

These results also clarify the role of unresolved tails. Elementary stages may generate unresolved-tail or aliasing residuals, but the parity of the complete propagator depends on the full stage organization. For the original EEI map, these residuals appear as a quadratic local-logarithmic defect. For \(E_h^{\rm S}\) and \(G_h\), method self-adjointness eliminates the quadratic term as an operator-level property; the remaining group defect is cubic and reflects the leading odd modified-generator contribution.

\begin{figure}[hbtp!]
  \centering
  \includegraphics[width=5.12in]{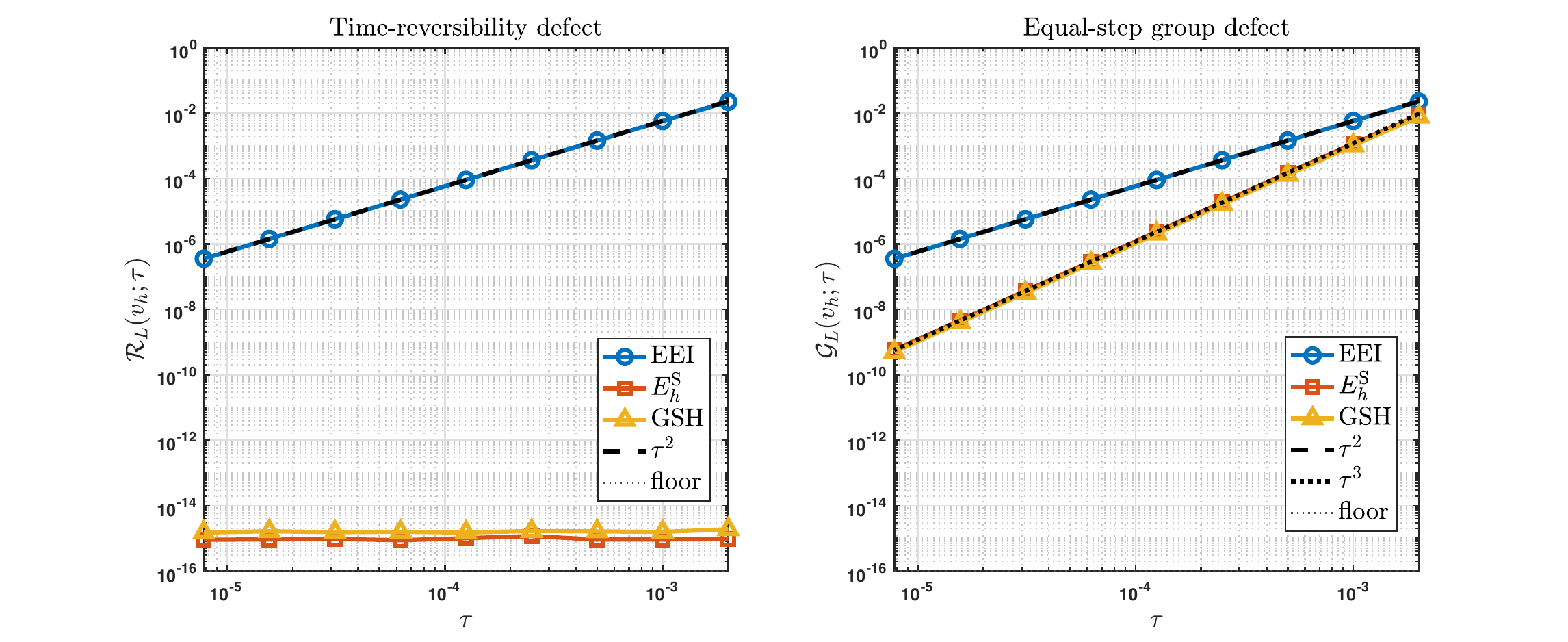}
  \caption{Parity diagnostics for \(E_h\), \(E_h^{\rm S}\), and \(G_h\), using the cutoff-near probe \(e^{i(x+(N/2-2)y)}\) with \(N=64\). Left: reversibility defect \({\mathcal R}_L(v_h;\tau)\), showing roundoff-level behavior for the two admissible propagators. Right: equal-step group defect \({\mathcal G}_L(v_h;\tau)\), showing quadratic behavior for \(E_h\) and cubic behavior for \(E_h^{\rm S}\) and \(G_h\).}
  \label{fig:admissible_parity_diagnostics}
\end{figure}

\subsection{Realized temporal orders of accuracy in full dynamics}
\label{subsec:full_dynamics_3d}

We finally test whether the fixed-grid propagator structure diagnosed above is reflected in full nonlinear rotational dipolar dynamics. The model is the three-dimensional rotational dipolar Gross--Pitaevskii equation, 
\begin{equation}\nonumber
i\partial_t \psi
=
\textstyle \left(
-\frac{1}{2}\Delta
+ V(\mathbf{x})
+ \beta |\psi|^2
+ \lambda \Phi
- \Omega \cdot \mathbf{L}
\right)\psi,
\end{equation}
used not as an exhaustive benchmark but as a representative test of the predicted mechanism for rotational nonlinear Schr\"odinger equations: the original discrete EEI propagator carries an active quadratic obstruction, while the admissible propagators have odd local-logarithm structure.

We use the non-axial rotation vector \(\Omega=(-0.7,1,-\sqrt{3})\), so all three rotational components are active. The domain is \({\mathcal D}=[-10,10]^3\), \(h=1/8\), \(\beta=2000\), \(\lambda=1500\), and \(T_{\rm final}=2.5\times10^{-2}\). The dipolar convolution is evaluated by the far-field smooth approximation (FSA) method \cite{LiuZhangFSA2026} using twofold zero padding in each direction and smoothing parameter \(\epsilon_{FSA}=2\). The initial state is the perturbed Gaussian
\[
  \psi_0(x,y,z)=Ce^{-(x^2+y^2+z^2)/2}\left(1+\sin(\pi z)+ i\cos(\pi(x+y+z))\right),
\]
with \(C\) to normalize the discrete mass. This state is smooth and localized but contains directional oscillations that activate the structure-sensitive effects. The oscillatory perturbation is used here as a controlled way to make the fixed-grid defect visible. Structure-sensitive states can also arise in practical simulations through vortices, phase-imprinted excitations, interference patterns, anisotropic rotation, or long-time nonlinear evolution that transfers energy toward less Gaussian Fourier profiles. Thus the perturbed Gaussian test should be viewed as a mechanism-revealing representative case rather than as an exhaustive catalog of physical regimes.

The reference solution is computed on the same grid using the sixth-order GSH-based method with \(\tau_{\rm ref}=5.0\times10^{-5}\). All reported errors are phase-aligned relative discrete \(L^2\) errors.

\begin{table}[t]
\centering
\caption{Temporal convergence under general three-dimensional rotation for the perturbed Gaussian initial data. Parameters are \({\mathcal D}=[-10,10]^3\), \(h=1/8\), \(\beta=2000\), \(\lambda=1500\), \(\Omega=(-0.7,1,-\sqrt{3})\), and \(T_{\rm final}=2.5\times10^{-2}\). The reference solution uses the sixth-order GSH-based method with \(\tau_{\rm ref}=5.0\times10^{-5}\). Rates marked \(^*\) indicate that the measured S-EEI error has reached an accuracy plateau in this test, rather than a genuine negative convergence order; see the discussion in the text.}
\label{tab:3d_general_propagator_compare}
{\small
\setlength{\tabcolsep}{3.6pt}
\begin{tabular}{c|c|ll|ll|ll}
\hline
\(\tau\) & total Strang & \multicolumn{2}{c|}{EEI} & \multicolumn{2}{c|}{S-EEI} & \multicolumn{2}{c}{GSH} \\
\hline
& blocks used & error & rate & error & rate & error & rate \\
\hline
\multicolumn{8}{c}{Second order (Strang)} \\
\hline
\(\frac{1}{200}\)  & \(5\)   & \(3.203\times10^{-2}\) & --      & \(3.203\times10^{-2}\) & --      & \(3.203\times10^{-2}\) & --      \\
\(\frac{1}{400}\)  & \(10\)  & \(5.649\times10^{-3}\) & \(2.503\) & \(5.649\times10^{-3}\) & \(2.503\) & \(5.649\times10^{-3}\) & \(2.503\) \\
\(\frac{1}{800}\)  & \(20\)  & \(1.385\times10^{-3}\) & \(2.029\) & \(1.384\times10^{-3}\) & \(2.029\) & \(1.384\times10^{-3}\) & \(2.029\) \\
\(\frac{1}{1600}\) & \(40\)  & \(3.447\times10^{-4}\) & \(2.006\) & \(3.445\times10^{-4}\) & \(2.007\) & \(3.445\times10^{-4}\) & \(2.007\) \\
\(\frac{1}{3200}\) & \(80\)  & \(8.626\times10^{-5}\) & \(1.999\) & \(8.604\times10^{-5}\) & \(2.002\) & \(8.604\times10^{-5}\) & \(2.002\) \\
\(\frac{1}{6400}\) & \(160\) & \(2.175\times10^{-5}\) & \(1.988\) & \(2.150\times10^{-5}\) & \(2.000\) & \(2.150\times10^{-5}\) & \(2.000\) \\
\hline
\multicolumn{8}{c}{Fourth order (Yoshida)} \\
\hline
\(\frac{1}{200}\)  & \(15\)  & \(4.823\times10^{-1}\) & --      & \(4.828\times10^{-1}\) & --      & \(4.828\times10^{-1}\) & --      \\
\(\frac{1}{400}\)  & \(30\)  & \(1.531\times10^{-3}\) & \(8.299\) & \(1.505\times10^{-3}\) & \(8.325\) & \(1.505\times10^{-3}\) & \(8.325\) \\
\(\frac{1}{800}\)  & \(60\)  & \(1.993\times10^{-4}\) & \(2.941\) & \(9.525\times10^{-5}\) & \(3.982\) & \(9.525\times10^{-5}\) & \(3.982\) \\
\(\frac{1}{1600}\) & \(120\) & \(8.947\times10^{-5}\) & \(1.156\) & \(5.928\times10^{-6}\) & \(4.006\) & \(5.928\times10^{-6}\) & \(4.006\) \\
\(\frac{1}{3200}\) & \(240\) & \(4.467\times10^{-5}\) & \(1.002\) & \(3.701\times10^{-7}\) & \(4.002\) & \(3.701\times10^{-7}\) & \(4.002\) \\
\(\frac{1}{6400}\) & \(480\) & \(2.232\times10^{-5}\) & \(1.001\) & \(2.314\times10^{-8}\) & \(3.999\) & \(2.313\times10^{-8}\) & \(4.000\) \\
\hline
\multicolumn{8}{c}{Sixth order (7-stage)} \\
\hline
\(\frac{1}{200}\)  & \(35\)   & \(8.558\times10^{-2}\) & --      & \(8.614\times10^{-2}\) & --       & \(8.611\times10^{-2}\) & --      \\
\(\frac{1}{400}\)  & \(70\)   & \(3.432\times10^{-4}\) & \(7.962\) & \(1.074\times10^{-4}\) & \(9.648\)  & \(1.074\times10^{-4}\) & \(9.647\) \\
\(\frac{1}{800}\)  & \(140\)  & \(1.604\times10^{-4}\) & \(1.097\) & \(1.993\times10^{-6}\) & \(5.752\)  & \(1.993\times10^{-6}\) & \(5.752\) \\
\(\frac{1}{1600}\) & \(280\)  & \(7.991\times10^{-5}\) & \(1.006\) & \(3.235\times10^{-8}\) & \(5.945\)  & \(3.234\times10^{-8}\) & \(5.945\) \\
\(\frac{1}{3200}\) & \(560\)  & \(3.988\times10^{-5}\) & \(1.003\) & \(1.220\times10^{-9}\) & \(4.729\)  & \(5.101\times10^{-10}\) & \(5.987\) \\
\(\frac{1}{6400}\) & \(1120\) & \(1.992\times10^{-5}\) & \(1.002\) & \(2.531\times10^{-9}\) & \(-1.053^*\) & \(1.003\times10^{-11}\) & \(5.668\) \\
\hline
\end{tabular}
}
\end{table}

Table~\ref{tab:3d_general_propagator_compare} shows that all three propagators give essentially the same second-order Strang behavior in the tested range, where the usual second-order splitting error dominates the smaller structural distinction among the three linear propagators. The distinction becomes visible at fourth and sixth order, after symmetric composition cancels the ordinary lower-order splitting terms. The very large rates between the two coarsest steps are pre-asymptotic transients and are not used to assess the designed order. For the fourth-order composition, S-EEI and GSH enter a clean fourth-order regime, whereas the original EEI method initially decreases rapidly but eventually approaches an approximately first-order regime. The sixth-order results sharpen this contrast: EEI again loses the design order after a pre-asymptotic drop and approaches roughly first-order behavior, while S-EEI and GSH show near-sixth-order convergence over several refinements, with GSH reaching \(1.003\times10^{-11}\) at the finest step.

These observations match the fixed-grid diagnostics. The original EEI map has a visible quadratic local-logarithmic defect, and real symmetric fourth- and sixth-order compositions cannot cancel such an even term when it is active on the propagated state. Under finite-time stability and without special trajectory-level cancellations, the one-step \(O(\tau^2)\) contribution accumulates as an \(O(\tau)\) global temporal error, consistent with the observed first-order accuracy associated with EEI.

By contrast, S-EEI and GSH are method self-adjoint and have odd local logarithms as fixed-grid propagators; their Strang blocks therefore satisfy the structural assumptions required by symmetric composition theory. The table confirms, in realistic three-dimensional nonlinear rotational dipolar dynamics, that observed temporal order is governed not only by the formal composition coefficients but also by the admissibility of the discrete linear propagator.

The starred S-EEI sixth-order rate should not be interpreted as a genuine negative convergence order. It occurs after the measured S-EEI error has saturated relative to the GSH-based reference solution, indicating that in this test the S-EEI computation has reached an apparent method-dependent accuracy plateau. This may reflect larger higher-order error constants of the EEI-based symmetrized propagator, rather than a loss of the formal sixth-order parity structure; accordingly, the starred entry is not used to assess the asymptotic order. The masking interpretation is consistent with the accuracy tests in \cite{LiuZhang2025}, where fourth-order convergence was observed for localized Gaussian-polynomial data. Such data fall into the masked regime of Lemma~\ref{lem:gaussian_tail}, a condition governed by spectral localization rather than by the nonlinear parameters \(\beta\) and \(\gamma\); our perturbed Gaussian test is designed to make the obstruction visible.

\begin{table}[t]
\centering
\caption{Estimated time step and total Strang-block count required by the GSH-based second-, fourth-, and sixth-order methods to reach the smallest final-state error reported in Table~\ref{tab:3d_general_propagator_compare}.}
\label{tab:gsh_cost_proxy}
{\small
\begin{tabular}{c|ccc}
\hline
& Second order & Fourth order & Sixth order \\
\hline
step size \(\tau\) & \(1.067\times10^{-7}\) & \(2.255\times10^{-5}\) & \(1.563\times10^{-4}\) \\
\hline
Strang blocks used & \(234277\) & \(3326\) & \(1120\) \\
\hline
\end{tabular}
}
\end{table}

As a simple step-count proxy within the GSH-based family, Table~\ref{tab:gsh_cost_proxy} estimates the time step and total Strang-block count required by the second-, fourth-, and sixth-order GSH-based methods to reach the smallest final-state error attained by the sixth-order GSH method in Table~\ref{tab:3d_general_propagator_compare}. The second- and fourth-order entries are extrapolated from the observed asymptotic rates. The table shows the expected high-accuracy advantage of high-order composition: to reach this target accuracy, the second-order method would require about \(2.3\times10^5\) Strang blocks, the fourth-order method about \(3.3\times10^3\), and the sixth-order method only \(1.1\times10^3\). Thus, within a fixed admissible backbone, the designed high order translates into a substantial reduction in the number of required splitting Strang blocks.

We emphasize that Table~\ref{tab:gsh_cost_proxy} is not a runtime or full efficiency comparison among EEI, S-EEI, and GSH. Different linear propagators have different FFT-stage costs per application, and those costs depend on implementation details such as memory layout, FFT libraries, and possible reuse of transforms. The point of the table is narrower: once a fixed-grid admissible propagator realizes the designed temporal order, substantially larger time steps, or equivalently fewer Strang blocks, can be used to reach a prescribed high accuracy. Thus the table illustrates the practical consequence of the structural mechanism, rather than an efficiency ranking among these propagators.

\section{Conclusion}
\label{sec:conclusion}

We studied high-order time splittings for rotational nonlinear Schr\"odinger equations from the viewpoint of fixed-grid linear propagator structure. The central observation is that continuous exactness of a factorized linear flow does not guarantee robust high-order composition after Fourier pseudospectral discretization: a stage-wise fixed-grid realization may lose method self-adjointness and acquire a quadratic term in its local logarithm. For the original EEI propagator, we traced this defect to unresolved Fourier tails generated by intermediate stages. The mechanism is state-dependent, visible for constants, Fourier modes, and cutoff-near data, but often masked for localized Gaussian-type states that are widely adopted in the literature.

We introduced fixed-grid admissibility and constructed two admissible propagators for arbitrary three-dimensional rotation: a symmetrized EEI propagator and a palindromic generalized shear propagator. Both are unitary, first-order consistent, method self-adjoint, and have odd local logarithms. The diagnostics and nonlinear dipolar simulations confirm the resulting structural distinction: the original EEI map exhibits a quadratic parity defect and loses robust fourth- and sixth-order behavior on structure-sensitive data, whereas the admissible propagators have roundoff-level reversibility defects, cubic group defects, and recover the designed high-order behavior. 

Future work includes extending the same admissibility principle to broader rotating dispersive systems, especially spinor Gross--Pitaevskii dynamics with arbitrary three-dimensional rotation and matrix-valued spin--orbit or internal spin couplings. %Another direction is to quantify the cubic modified-generator constants more sharply, including their dependence on regularity, localization, and frequency of the evolving state.

\section*{Declarations}

\subsection*{Funding}
No funding was received to assist with the preparation of this manuscript.

\subsection*{Competing interests}
The authors have no competing interests to declare that are relevant to the content of this article.

\subsection*{Data availability}
No external datasets were used in this study. The numerical data supporting the figures and tables were generated from the algorithms, initial data, and parameter settings described in the manuscript and are available from the corresponding author upon reasonable request.

\subsection*{Code availability}
The implementation code used for the numerical experiments is not publicly available. Reasonable requests for implementation details needed to reproduce the reported numerical results may be directed to the corresponding author.

\subsection*{Author contributions}
Tianqi Zhang and Fei Xue contributed to the conception and design of the study, the mathematical analysis, the numerical experiments, and the writing and revision of the manuscript. Both authors read and approved the final manuscript.

\bibliography{BEC_refs_complete}

\end{document}